\newlength{\defbaselineskip}
\newcommand{\setlinespacing}[1]%
           {\setlength{\baselineskip}{#1 \defbaselineskip}}
\theoremstyle{plain}
\newtheorem{thm}{Theorem}[section]
\newtheorem{cor}[thm]{Corollary}
\newtheorem{lem}[thm]{Lemma}
\newtheorem{prop}[thm]{Proposition}
\makeatletter\@addtoreset{equation}{section} \makeatother
\begin{document}

\title {Invariant subspaces of weighted Bergman spaces in infinitely many variables}
\author{
Hui Dan \quad Kunyu Guo \quad Jiaqi Ni
 }
\date{}
\maketitle \noindent\textbf{Abstract:}
This paper is concerned with polynomially generated multiplier invariant subspaces of the weighted Bergman space $A_{\bm{\beta}}^2$ in infinitely many variables. We completely classify these invariant subspaces under the unitary equivalence. Our results not only cover cases of both the Hardy space $H^{2}(\mathbb{D}_{2}^{\infty})$ and the Bergman space $A^{2}(\mathbb{D}_{2}^{\infty})$ in infinitely many variables, but also apply in finite-variable setting.

\vskip 0.1in \noindent \emph{Keywords:}
Invariant subspace, unitary equivalence, Hilbert module, Hardy space, weighted Bergman space, infinitely many variables.

\section{Introduction}

Let $T$ be a bounded linear operator on a complex separable Hilbert space $\mathcal{H}$. An invariant subspace of $T$ is a closed subspace $M$ of $\mathcal{H}$ such that $TM\subset M$. The famous invariant subspace problem (ISP) asks whether for every bounded linear operator on a complex separable Hilbert space has a nontrivial invariant subspace. Up to now, it is still an open problem. Let $A^2(\mathbb{D})$ denote the Bergman space over the open unit disk $\mathbb{D}$. H. Bercovici, C. Foias and C. Pearcy \cite{BFP} showed that the ISP is equivalent to the following problem: if $M$, $N$ are invariant subspaces of the Bergman shift (i.e. the coordinate multiplication operator on $A^2(\mathbb{D})$) satisfying $N\subset M$ and $\dim M/N=\infty$, then is there another invariant subspace $L$ such that $N\varsubsetneqq L\varsubsetneqq M$?  Therefore, the research of the ISP can be reduced to the study of invariant subspaces of the Bergman shift.

Recall that two invariant subspaces $M$, $N$ of $T$ are unitarily equivalent, if there is a unitary operator $U: M\rightarrow N$ such that $UT|_M=T|_N U$. Two unitarily equivalent invariant subspaces are essentially the same. As well known, two invariant subspaces of the Bergman shift are unitarily equivalent only when they coincide \cite{Ri}. This is an intrinsic reason why the research of the ISP can be reduced to the study of invariant subspaces of the Bergman shift, and why people are interested in invariant subspaces of the Bergman shift. For the Hardy space $H^{2}(\mathbb{D})$ over the open unit disk, Beurling's remarkable theorem states that each invariant subspace $M$ of the Hardy shift (i.e. the coordinate multiplication operator on $H^{2}(\mathbb{D})$) has the form $M=\eta H^{2}(\mathbb{D})$ for some inner function $\eta\in H^{2}(\mathbb{D})$. Therefore, all nonzero invariant subspaces of the Hardy shift are unitarily equivalent. Under unitary equivalence, invariant subspaces of the Bergman shift and the Hardy shift present two extreme phenomena. This inspires us to investigate the unitary equivalence of joint invariant subspaces of some ``intertwined" spaces, that is, the tensor product of Bergman spaces and Hardy spaces.

In this paper, we will use the language of Hilbert modules to pursue our study. The theory of Hilbert modules, developed by R. Douglas and V. Paulsen \cite{DP}, provides an appropriate framework to the operator theory in function spaces. Let $\mathcal{H}$ be a complex Hilbert space, and  $\mathcal{A}$ a commutative  algebra over the complex  number field $\mathbb{C}$. By $\mathcal{A}$-Hilbert module $\mathcal{H}$, it means that we assign an algebraic homomorphism $\sigma: \mathcal{A}\to B(\mathcal{H})$. In this case,  the module action is given by
$$ah=\sigma(a)h,\quad a\in\mathcal{A}, \,\,h\in \mathcal{H},$$
where $B(\mathcal{H})$ denotes the algebra of all bounded linear operators on $\mathcal{H}$.
 A closed subspace of $\mathcal{H}$ invariant under this module action is called a ($\mathcal{A}$-Hilbert) submodule of $\mathcal{H}$. Basic examples of Hilbert modules are the Hardy space $H^2(\mathbb{D}^n)$ over the polydisk $\mathbb{D}^n$ and the Bergman space $A^{2}(\Omega)$ over a bounded domain $\Omega$ in $\mathbb{C}^n$. Both of them are $\mathcal{P}_n$-Hilbert modules, where $\mathcal{P}_n=\mathbb{C}[\zeta_1,\cdots,\zeta_n]$, the polynomial ring of $n$-complex variables. Submodules of $H^2(\mathbb{D}^n)$ (or $A^{2}(\Omega)$) are exactly the joint invariant subspaces of coordinate multiplication operators.

Recall that two submodules $M$, $N$ of an $\mathcal{A}$-Hilbert module $\mathcal{H}$ are unitarily equivalent, if there exists a unitary map $U$: $M\rightarrow N$, called unitary module map, such that $U(ah)=aUh$ whenever $a\in\mathcal{A}$, $h\in M$. As we mentioned, all nonzero submodules of the Hardy module $H^{2}(\mathbb{D})$ are unitarily equivalent. However, this fails in the Hardy module $H^2(\mathbb{D}^n)$ for $n>1$. For such cases of several variables, O. Agrawal, D. Clark, and R. Douglas \cite{ACD} showed two submodules of finite codimension in $H^{2}(\mathbb{D}^{n})$ are unitarily equivalent only when they coincide. Note that by Ahern-Clack's theorem \cite{AC}, all finite codimensional submodules of $H^{2}(\mathbb{D}^{n})$ are generated by polynomials. K. Yan \cite{Yan} studied the unitary equivalence of submodules of $H^{2}(\mathbb{D}^{n})$ generated by a single homogeneous polynomial. For the case of general polynomially generated submodules, K. Guo \cite{Guo1} gave a complete characterization of unitary equivalence by virtue of the so-called Beurling forms of polynomial ideals. This result is stated as follows: Two submodules $M$, $N$ of $H^2(\mathbb{D}^n)$, both generated by polynomials, are unitarily equivalent if and only if there exists an ideal $\mathcal{L}$ of $\mathcal{P}_{n}$ and a pair $p,q$ of polynomials with the same modulus on the distinguished boundary $\mathbb{T}^n$, such that $M$ and $N$ are the closures of $p\mathcal{L}$ and $q\mathcal{L}$, respectively. Unitary equivalence of general submodules of $H^2(\mathbb{D}^n)$ $(n>1)$ is extremely difficult to characterize and this problem remains unsolved. For more works on the classification of Hardy submodules, we refer readers to \cite{CD, CG, DPSY, DPY, DY, Guo1, GY, Izu, Yan, Yang1, Yang2}.

In the case of the Bergman module $A^{2}(\Omega)$, the corresponding problem seems easier. When $n=1$, S. Richter \cite{Ri} proved that two submodules of $A^{2}(\Omega)$ are unitarily equivalent only if they are equal. For multidimensional cases, M. Putinar \cite{Pu} proved an analogue of this result when $\Omega$ is a bounded pseudoconvex domain in $\mathbb{C}^n$. In X. Chen and Guo's book \cite{CG}, Putinar's result is generalized to general bounded domains in $\mathbb{C}^n$. These works imply the high rigidity of Bergman submodules.

In the present paper, we will consider submodules of the following weighted Bergman spaces in infinitely many variables.

Let $dA(z)$ denote the normalized area measure on $\mathbb{D}$. The weighted Bergman space, $A_{\beta}^{2}=A_{\beta}^{2}(\mathbb{D})\ (\beta>-1)$, consists of all holomorphic functions in $\mathbb{D}$ with
$$\|f\|_{A_{\beta}^{2}}=\left(\int_{\mathbb{D}}|f(z)|^{2}dA_{\beta}(z)\right)^{\frac{1}{2}}<\infty,$$
where $dA_{\beta}(z)=(\beta+1)(1-|z|^2)^\beta dA(z)$. When $\beta\rightarrow-1$, noticing that the measures $dA_{\beta}(z)$ on the closed unit disk converge to $\frac{d\theta}{2\pi}$ in the weak-star topology, the limit space of the weighted Bergman spaces $A_{\beta}^{2}$ is the Hardy space $H^{2}(\mathbb{D})$. We thus write $A_{-1}^{2}$ for the Hardy space $H^{2}(\mathbb{D})$.

Let $\mathbb{Z}_{+}^{\infty}$ be the set of all finitely supported sequences of nonnegative integers. For each $\alpha=(\alpha_{1},\cdots,\alpha_{m},0,0,\cdots)\in\mathbb{Z}_{+}^{\infty}$ and a sequence of complex numbers $\zeta=(\zeta_1,\zeta_2,\cdots)$, write $\zeta^{\alpha}=\zeta_{1}^{\alpha_{1}}\cdots\zeta_{m}^{\alpha_{m}}$. Let $\mathbb{N}=\{1,2,\cdots\}$ be the set of positive integers, and $\bm{\beta}=\{\beta_{n}\}_{n\in\mathbb{N}}$ a sequence in $[-1,\infty)$. We define the weighted Bergman space $A_{\bm{\beta}}^2$ in infinitely many variables to be exactly the tensor product of infinitely many usual weighted Bergman spaces (including $A_{-1}^{2}$). More precisely, $A_{\bm{\beta}}^2=\bigotimes_{n\in\mathbb{N}}A_{\beta_{n}}^{2}$ in infinitely many variables is defined as follows:
$$A_{\bm{\beta}}^2=\bigotimes\limits_{n\in\mathbb{N}}A_{\beta_{n}}^{2}=\left\{F=\sum\limits_{\alpha\in\mathbb{Z}_{+}^{\infty}}c_{\alpha}\zeta^{\alpha}: \|F\|^{2}=\sum\limits_{\alpha\in\mathbb{Z}_{+}^{\infty}}|c_{\alpha}|^{2}\omega_{\alpha}<\infty\right\},$$
where
$$\omega_{\alpha}=\prod_{n=1}^{\infty}\|\zeta_{n}^{\alpha_{n}}\|_{A_{\beta_{n}}^{2}}^{2}
=\prod_{n=1}^{\infty}\frac{\alpha_{n}!\Gamma(\beta_{n}+2)}{\Gamma(\alpha_{n}+\beta_{n}+2)},\quad \alpha=(\alpha_1,\alpha_2,\cdots)\in\mathbb{Z}_{+}^{\infty},$$
and $\Gamma$ stands for the usual Gamma function. When $\bm{\beta}=\{-1\}_{n\in\mathbb{N}}$,  $A_{\bm{\beta}}^2 $ is the Hardy space $H^{2}(\mathbb{D}_{2}^{\infty})$ over the Hilbert multidisk $\mathbb{D}_{2}^{\infty}$ (see \cite{Ni}); when $\bm{\beta}=\{0\}_{n\in\mathbb{N}}$, $A_{\bm{\beta}}^2 $ is the (unweighted) Bergman space $A^{2}(\mathbb{D}_{2}^{\infty})$ which is the direct limit of $\{A^{2}(\mathbb{D}^{n})\}_{n\in\mathbb{N}}$.

Set $\mathcal{P}_\infty=\bigcup_{n=1}^{\infty}\mathcal{P}_n$, the ring of polynomials in countably infinitely many complex variables for which each polynomial in $\mathcal{P}_\infty$ only depends on finitely many complex variables. It is easily seen that the weighted Bergman space $A_{\bm{\beta}}^2$ has a canonical structure of $\mathcal{P}_\infty$-Hilbert module, where the module action is defined by multiplications of polynomials in $\mathcal{P}_\infty$.

In this paper, our main result is Theorem \ref{thm301}, a complete classification under unitary equivalence for submodules of $A_{\bm{\beta}}^2$ generated by polynomials. As a consequence, we obtain a rigidity theorem for finite codimensional submodules of $A_{\bm{\beta}}^2$ in Corollary \ref{thm303}, which generalizes Agrawal, Clark and Douglas's result \cite{ACD}. We also give the characterization of the unitary equivalence of submodules of $H^{2}(\mathbb{D}_{2}^{\infty})$ generated by polynomials (see Corollary \ref{thm304}). Another corollary (see Corollary \ref{thm305}) generalizes results on the rigidity of Bergman submodules over $\mathbb{D}^n$ generated by polynomials to the infinite-variable setting. In general, traditional methods are not valid because the ring $\mathcal{P}_\infty$ is no longer Notherian. Proofs in this paper yield a more general result, Theorem \ref{thm302}, which covers almost all previously known results in finite-variable setting.
For example, for the Hardy space, our proof significantly simplifies the existing one in \cite{Guo1};  while for the Bergman space, a present proof is more novel than one in \cite{CG, Pu}.

This paper is organized as follows. Section 2 is dedicated to an introduction to basic notations for $A_{\bm{\beta}}^2$ and some preparatory results. In Section 3, we completely characterize the unitary equivalence of submodules of $A_{\bm{\beta}}^2$ generated by polynomials. Section 4 is concerned with some results on principle submodules of $A_{\bm{\beta}}^2$.

\section{Preliminaries}

This section consists of two parts. In the first part, we introduce some basic notations for the weighted Bergman space $A_{\bm{\beta}}^2$ in infinitely many variables. The second part is devoted to some preparatory results.

\subsection{Some basic notations for the weighted Bergman space in infinitely many variables}

For a sequence $\bm{\beta}=\{\beta_{n}\}_{n\in\mathbb{N}}$ in $[-1,\infty)$, as defined in Introduction, the weighted Bergman space $A_{\bm{\beta}}^2$ is exactly the tensor product of $\{A_{\beta_{n}}^{2}\}_{n\in\mathbb{N}}$ with stabilizing sequence $\{1\}_{n\in\mathbb{N}}$ \cite{Ber}. That is to say, $A_{\bm{\beta}}^2$ is the completion of $\mathcal{P}_{\infty}$ with respect to the inner product $\langle\cdot,\cdot\rangle$ determined by
$$\langle\zeta^{\gamma},\zeta^{\delta}\rangle=\prod_{n=1}^{\infty}\langle\zeta_{n}^{\gamma_{n}},\zeta_{n}^{\delta_{n}}\rangle_{A_{\beta_{n}}^{2}},
\quad\gamma,\delta\in\mathbb{Z}_{+}^{\infty},$$
where $\langle\cdot,\cdot\rangle_{A_{\beta_{n}}^{2}}$ denotes the inner product in $A_{\beta_{n}}^{2}$. We define the weighted Hilbert multidisk $\mathbb{D}_{2,\bm{\beta}}^{\infty}$ as follows:
$$\mathbb{D}_{2,\bm{\beta}}^{\infty}=\left\{\zeta=(\zeta_1,\zeta_2,\cdots): \sum\limits_{n=1}^{\infty}(\beta_{n}+2)|\zeta_n|^2<\infty\;\text{and for each}\;n\in\mathbb{N},\;|\zeta_n|<1\right\}.$$
In particular, when $\bm{\beta}=\{-1\}_{n\in\mathbb{N}}$, $\mathbb{D}_{2,\bm{\beta}}^{\infty}$ is the standard Hilbert multidisk
$$\mathbb{D}_{2}^{\infty}=\left\{\zeta=(\zeta_1,\zeta_2,\cdots)\in l^{2}: \;\text{for each}\;n\in\mathbb{N},\;|\zeta_n|<1\right\}$$
(see \cite{Ni}). It is easy to check that all functions in $A_{\bm{\beta}}^2$ are holomorphic in $\mathbb{D}_{2,\bm{\beta}}^{\infty}$ (we refer the readers to \cite{Di} for the definition of holomorphic functions in a domain in some Banach space), and $A_{\bm{\beta}}^2$ is a reproducing kernel Hilbert space with the kernel
$$K_{\lambda}(\zeta)=\prod\limits_{n=1}^{\infty}\frac{1}{(1-\overline{\lambda}_{n}\zeta_{n})^{\beta_{n}+2}},\quad\lambda\in\mathbb{D}_{2,\bm{\beta}}^{\infty}.$$

Following \cite{Ni}, for a nonempty subset $S\subset\mathbb{N}$, we define $\mathbb{D}_{2}^{S}$ to be sequences in $\mathbb{D}_{2}^{\infty}$ which are supported in $S$, that is,
$$\mathbb{D}_{2}^{S}=\{\zeta\in\mathbb{D}_{2}^{\infty}:\;\text{for each}\;n\notin S,\;\zeta_{n}=0\}.$$
When $S$ is finite, $\mathbb{D}_{2}^{S}$ is abbreviated as $\mathbb{D}^{S}$. For a nonempty subset $S$ of $\mathbb{N}$, we define $\mathcal{P}_S$ to be the polynomial ring generated by $\{\zeta_{j}\}_{j\in S}$, and $\mathcal{P}_{\varnothing}$ to be the complex field $\mathbb{C}$. Following the definition of $A_{\bm{\beta}}^2$, the tensor product $A_{\bm{\beta},S}^2=\bigotimes_{n\in S}A_{\beta_{n}}^{2}$ is defined similarly. It is a simple matter to see that $A_{\bm{\beta},S}^2$ is exactly the closure of $\mathcal{P}_S$ in $A_{\bm{\beta}}^2$. If $\beta_{n}=-1$ for every $n\in S$, then $A_{\bm{\beta},S}^2$ is the Hardy space $H^{2}(\mathbb{D}_{2}^{S})$ (see \cite[pp.1613]{Ni} for a definition). When $S=\{1,2,\cdots,n\}$, we write $A_{\bm{\beta},n}^2$ instead of $A_{\bm{\beta},S}^2$. In this situation, $A_{\bm{\beta},n}^2$ is exactly identified with a function space over the polydisk $\mathbb{D}^n$.

Fix a sequence $\bm{\beta}=\{\beta_{n}\}_{n\in\mathbb{N}}$ in $[-1,\infty)$, put $H=\{n\in\mathbb{N}: \beta_{n}=-1\}$ and $B=\{n\in\mathbb{N}: \beta_{n}\neq-1\}$.
Then  $A_{\bm{\beta}}^2$ has a canonical decomposition
$$A_{\bm{\beta}}^2=H^{2}(\mathbb{D}_{2}^{H})\otimes A_{\bm{\beta},B}^2.$$
Set $\mathbb{Z}_{+}^{B}$ to be those elements in  $\mathbb{Z}_{+}^{\infty}$ supported in $B$, that is,
$$\mathbb{Z}_{+}^{B}=\{\alpha\in\mathbb{Z}_{+}^{\infty}: \alpha_{n}=0\;\text{for all}\;n\notin B\}.$$
This implies that  every function $F\in A_{\bm{\beta}}^2$ can be expanded uniquely as follows:
$$F=\sum_{\alpha\in\mathbb{Z}_{+}^{B}}F_{\alpha}\zeta^{\alpha},$$
where $\{F_{\alpha}: \alpha\in\mathbb{Z}_{+}^{B}\}\subset H^{2}(\mathbb{D}_{2}^{H})$. For each $\alpha\in\mathbb{Z}_{+}^{B}$, we define a linear operator $C_{\alpha}$ on $A_{\bm{\beta}}^2$ by putting $C_{\alpha}F=F_{\alpha}$. An easy computation shows that each $C_{\alpha} (\alpha\in\mathbb{Z}_{+}^{B})$ is bounded on $A_{\bm{\beta}}^2$.

For every subset $S$ of $\mathbb{N}$, let $E_{S}$ be the orthogonal projection from $A_{\bm{\beta}}^2$ onto $A_{\bm{\beta},S}^2$. It is easy to check that for $F\in A_{\bm{\beta}}^2$,
$$(E_{S}F)(\zeta)=F(\hat{\zeta}),\quad\zeta\in\mathbb{D}_{2,\bm{\beta}}^{\infty},$$
where $\hat{\zeta}\in\mathbb{D}_{2,\bm{\beta}}^{\infty}$ such that
$$\hat{\zeta}_{n}=\begin{cases} \zeta_{n} & n\in S, \\
0 & n\notin S. \end{cases}$$
Therefore $E_{S}$ is multiplicative on $A_{\bm{\beta}}^2$ in the sense that $E_{S}(FG)=E_{S}F\cdot E_{S}G$ if $F,G$ and $FG\in A_{\bm{\beta}}^2$.

When $S=\{1,2,\cdots,n\}$, we write $E_{n}$ in stead of $E_{S}$. Then for each $F\in A_{\bm{\beta}}^2$, the sequence $\{\|E_{n}F\|\}_{n=1}^{\infty}$ is ascending, and $\|F-E_{n}F\|\rightarrow0\ (n\rightarrow\infty)$. Set $dA_{-1}=\frac{d\theta}{2\pi}$, the normalized Lebesgue measure on the unit circle $\mathbb{T}$. An important observation is  that for each $F\in A_{\bm{\beta}}^2$,
$$\|F\|^{2}=\sup\limits_{n\geq1}\sup\limits_{0<r<1}\int_{\Xi_{n}}|(E_{n}F)(r\zeta_{1},\cdots,r\zeta_{n})|^{2}dA_{\beta_{1}}(\zeta_{1})\cdots dA_{\beta_{n}}(\zeta_{n}),$$
where
$$\Xi_{n}=\left\{\zeta=(\zeta_{1},\cdots,\zeta_{n})\in\overline{\mathbb{D}}^{n}:\;\text{when}\;k\in H,\;\zeta_{k}\in\mathbb{T}\;\text{; otherwise,}\;\zeta_{k}\in\overline{\mathbb{D}}\right\},$$
and $\overline{\mathbb{D}}$ denotes the closed unit disk.

\subsection{Some preparatory results}

In this subsection, we will introduce some preparatory results. First, let us introduce some notations. For $S\subset\mathbb{N}$ and a subset $\mathcal{F}$ of $A_{\bm{\beta},S}^2$, let $[\mathcal{F}]_{S}$ be the submodule of $A_{\bm{\beta},S}^2$ generated by $\mathcal{F}$. When $S=\{1,2,\cdots,n\}$, we denote $[\mathcal{F}]_{S}$ by $[\mathcal{F}]_{n}$; when $S=\mathbb{N}$, we denote $[\mathcal{F}]_{S}$ by $[\mathcal{F}]$.

Suppose $\varphi\in A_{\bm{\beta},n}^2$. It follows from \cite[Theorem 2.3]{Guo1} that each function $f$ in $[\varphi]_n$ admits a  factorization $f=\varphi g$, where $g$ is holomorphic in $\mathbb{D}^n$. Conversely, if a holomorphic function $g$ is such that $\varphi g\in  A_{\bm{\beta},n}^2$,  we want to know  whether $\varphi g$ belongs to $[\varphi]_n$. The following theorem gives a positive answer in the case that  $\varphi$ is holomorphic in some neighborhood of $\overline{\mathbb{D}}^n$.

\begin{thm}\label{thm201}
Suppose $\varphi$ is holomorphic in some neighborhood of $\overline{\mathbb{D}}^n$. Then
$$[\varphi]_n=\left\{\varphi f\in A_{\bm{\beta},n}^2:f\ \mathrm{is}\ \mathrm{holomorphic}\ \mathrm{in}\ \mathbb{D}^n\right\}.$$
\end{thm}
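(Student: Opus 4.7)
The plan is to prove the two inclusions separately. The inclusion $\subset$ is immediate from \cite[Theorem 2.3]{Guo1}: every element of $[\varphi]_n$ factors as $\varphi f$ with $f$ holomorphic in $\mathbb{D}^n$, and lies in $A_{\bm{\beta},n}^2$ by construction. The substantive work is the reverse inclusion.

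For $\supset$, fix $f$ holomorphic in $\mathbb{D}^n$ with $\varphi f\in A_{\bm{\beta},n}^2$ and set $f_r(z)=f(rz)$ for $0<r<1$. Because $f$ is holomorphic in $\mathbb{D}^n$, $f_r$ extends holomorphically to $(1/r)\mathbb{D}^n\supset\overline{\mathbb{D}}^n$, so its Taylor series converges absolutely and uniformly on $\overline{\mathbb{D}}^n$; this yields polynomials $p_k\to f_r$ uniformly on $\overline{\mathbb{D}}^n$. Since $\varphi$ is bounded on $\overline{\mathbb{D}}^n$, $\varphi p_k\to\varphi f_r$ uniformly on $\overline{\mathbb{D}}^n$, and uniform convergence there dominates the $A_{\bm{\beta},n}^2$-norm (the defining measure lives on $\Xi_n\subseteq\overline{\mathbb{D}}^n$). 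Hence $\varphi f_r\in\overline{\varphi\mathcal{P}_n}=[\varphi]_n$ for every $r\in(0,1)$. Since $[\varphi]_n$ is closed, it then suffices to prove the norm convergence $\varphi f_r\to\varphi f$ in $A_{\bm{\beta},n}^2$ as $r\to1^-$.

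To establish that convergence I would set $F:=\varphi f\in A_{\bm{\beta},n}^2$, write $F_r(z)=F(rz)$, and split
$$\varphi f_r - F \;=\; \bigl(\varphi f_r - F_r\bigr)+\bigl(F_r-F\bigr).$$
The second summand tends to zero by the standard dilation convergence on $A_{\bm{\beta},n}^2$, which in this monomial-orthogonal setting reduces to $\|F_r\|^2=\sum_\alpha r^{2|\alpha|}|F_\alpha|^2\omega_\alpha\nearrow\|F\|^2$ combined with pointwise convergence. For the first summand, note that $\varphi f_r(z)-F_r(z)=[\varphi(z)-\varphi(rz)]f(rz)$; since $\varphi$ extends holomorphically beyond $\overline{\mathbb{D}}^n$, it is uniformly Lipschitz there and $|\varphi(z)-\varphi(rz)|\leq K(1-r)$ uniformly on $\overline{\mathbb{D}}^n$. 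The goal is then $\|[\varphi-\varphi(r\,\cdot)]f_r\|_{A_{\bm{\beta},n}^2}\to0$.

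The main obstacle is exactly this last estimate. The crude bound $K(1-r)\|f_r\|$ is insufficient when $\varphi$ has boundary zeros and $\|f_r\|$ grows faster than $(1-r)^{-1}$ (as happens already for $\varphi=(1-z)^3,\ f=1/(1-z)^3$). The correct idea is to exploit the cancellation between $\varphi(z)-\varphi(rz)$ and the singularities of $f$: on the zero locus of $\varphi$, $\varphi(z)-\varphi(rz)$ vanishes together with $\varphi$ up to a factor of order $1-r$, and $|f|$ is controlled precisely there by $|\varphi f|/|\varphi|$. A convenient encoding is the change of variables $w=rz$ in the integral $\int|\varphi(z)-\varphi(rz)|^2|f(rz)|^2\,d\mu(z)$, which converts the bound to an integral of $|\varphi(w/r)/\varphi(w)-1|^2|F(w)|^2$ against the rescaled measure; the polynomial decay of the Bergman weights $(\beta_k+1)(r^2-|w_k|^2)^{\beta_k}$ combined with the Lipschitz estimate on $\varphi$ and dominated convergence (using $|F|^2\in L^1(d\mu)$ as majorant) drives the integral to zero. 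Once this is in hand, the proof is complete.
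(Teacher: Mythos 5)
Your overall skeleton coincides with the paper's: the inclusion $\subset$ via \cite[Theorem 2.3]{Guo1}, and for $\supset$ the observation that $\varphi f_r\in[\varphi]_n$ (correct, by uniform polynomial approximation of $f_r$ on $\overline{\mathbb{D}}^n$) followed by a passage to the limit $r\to1^-$. Note, however, that you do not need norm convergence: $[\varphi]_n$ is a closed subspace, hence weakly closed, so weak convergence $\varphi f_r\rightharpoonup\varphi f$ already finishes the argument. This is exactly what the paper does, quoting its Lemma 2.2 (a variant of \cite[Lemma 3.5]{DG}), which requires $\varphi(0)\neq0$; the general case is then reduced to this one by a M\"obius change of variables $W$.

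The genuine gap is in the one step you yourself flag as ``the main obstacle'': the estimate $\|[\varphi-\varphi(r\,\cdot)]f_r\|\to0$ (even the weaker statement $\sup_r\|\varphi f_r\|<\infty$, which is what weak convergence really needs) is asserted but not proved, and the sketch given does not work as stated. First, the claim that ``on the zero locus of $\varphi$, $\varphi(z)-\varphi(rz)$ vanishes together with $\varphi$'' is false: at a zero $\zeta_0$ of $\varphi$ one has $\varphi(\zeta_0)-\varphi(r\zeta_0)=-\varphi(r\zeta_0)$, which is generically nonzero. Second, the dominated-convergence argument with majorant $|F|^2$ requires $|\varphi(w/r)/\varphi(w)-1|\leq C$ uniformly in $r$ and $w$; this fails outright at any interior zero $a\in\mathbb{D}^n$ of $\varphi$ (there $\varphi(a)=0$ while $\varphi(a/r)\neq0$ in general), and near boundary zeros its validity in several variables is precisely the delicate \L ojasiewicz-type inequality that constitutes the real content of \cite[Lemma 3.5]{DG} --- it cannot be dispatched by ``the Lipschitz estimate on $\varphi$'' alone, as your own example $\varphi=(1-z)^3$, $f=(1-z)^{-3}$ already shows for the crude bound. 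In one variable one can salvage your argument by factoring $\varphi$ into interior-zero, boundary-zero, and zero-free parts and estimating each quotient $\bigl(z-r a_k\bigr)/\bigl(r(z-a_k)\bigr)$ separately, but in several variables the zero variety does not factor this way, and the required uniform comparison of $|\varphi(w/r)|$ with $|\varphi(w)|$ away from $Z(\varphi)$ must be proved, not assumed. As written, the proposal reduces the theorem to an unproved lemma that is at least as hard as the one the paper cites, and in fact harder, since you aim for norm rather than weak convergence.
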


For each $r\in(0,1)$ and a holomorphic function $f$ in $\mathbb{D}^n$, we define $f_{r}(\zeta)=f(r\zeta)$, $\zeta\in\mathbb{D}^{n}$. To prove Theorem \ref{thm201}, we need the following lemma. It can be obtained by a similar proof as in \cite[Lemma 3.5]{DG}.

\begin{lem}\label{thm203}
Let $f$ be holomorphic in some neighborhood of $\overline{\mathbb{D}}^n$ with $f(0)\neq0$. If $g$ is  holomorphic  in $\mathbb{D}^n$ such that $fg\in A_{\bm{\beta},n}^2$, then $\{fg_r\}$ converges to $fg$ weakly in $A_{\bm{\beta},n}^2$ as $r\rightarrow1^-$.
\end{lem}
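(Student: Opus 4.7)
I would deduce weak convergence from two ingredients: pointwise convergence of $(fg_r)(\lambda)$ to $(fg)(\lambda)$ against a weakly total subset of $A_{\bm{\beta},n}^2$, combined with a uniform norm bound $\sup_{r_0 \leq r < 1}\|fg_r\|_{A_{\bm{\beta},n}^2} < \infty$. Once both are in hand, a standard Banach--Alaoglu argument (every weak limit point must equal $fg$ by the first ingredient, and the second ingredient guarantees existence of such limit points) forces $fg_r \rightharpoonup fg$.

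The first ingredient is easy: since $A_{\bm{\beta},n}^2$ is a reproducing kernel Hilbert space whose kernels $\{K_\lambda : \lambda \in \mathbb{D}^n\}$ have dense linear span, it suffices to verify that
\[
\langle fg_r, K_\lambda\rangle = (fg_r)(\lambda) = f(\lambda)\,g(r\lambda) \longrightarrow f(\lambda)\,g(\lambda) = (fg)(\lambda) = \langle fg, K_\lambda\rangle
\]
for each $\lambda\in\mathbb{D}^n$, which is immediate from the continuity of $g$ at $\lambda$.

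The natural approach to the uniform norm bound is the pointwise factorization
\[
fg_r(\zeta) \;=\; \frac{f(\zeta)}{f(r\zeta)}\,(fg)_r(\zeta),
\]
valid off the zero set of $f_r$. Since dilation $h \mapsto h_r$ is essentially a contraction on $A_{\bm{\beta},n}^2$ (up to a constant that stays bounded for $r$ close to $1$, verified via the change of variables $w=r\zeta$ in the weighted integral of Section 2.1), one is reduced to controlling the multiplier $f(\zeta)/f(r\zeta)$ in a suitable sense. Here the hypotheses that $f$ extends holomorphically to some neighborhood $\rho\mathbb{D}^n$ ($\rho>1$) of $\overline{\mathbb{D}}^n$ and that $f(0)\neq 0$ are essential: the zero set of $f$ in $\rho\mathbb{D}^n$ is a proper analytic variety avoiding $0$, and one can factor out or isolate its contribution and combine with a careful estimate in the defining integral of $\|\cdot\|_{A_{\bm{\beta},n}^2}$, in the style of \cite[Lemma~3.5]{DG}.

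The principal obstacle is precisely this last estimate. The ratio $f/f_r$ is generally \emph{not} uniformly bounded in sup norm: interior zeros of $f$ produce poles of $f/f_r$ that shift with $r$, so a naive $L^\infty$ bound on the multiplier fails. Overcoming this requires exploiting cancellation in the integral---for example, decomposing the integration domain into regions near and far from the zero set of $f$, using continuity of $g$ on $\mathbb{D}^n$ to bound $g$ on small neighborhoods of interior zeros of $f$, and using the integrability of $|fg|^2$ against the weighted product measure $d\mu_{\bm{\beta},n}$ to control the remaining regions where $f$ is bounded away from zero. This is the only substantive step; everything else in the proof is routine.
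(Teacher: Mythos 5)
Your skeleton is the right one, and it is essentially the only one available: the paper does not prove this lemma directly but defers to \cite[Lemma 3.5]{DG}, and that argument likewise reduces everything to pointwise convergence against the reproducing kernels together with the uniform bound $\sup_{r}\|fg_r\|<\infty$. Your first ingredient and the Banach--Alaoglu step are fine, and you correctly diagnose that $f/f_r$ admits no uniform sup-norm bound. The gap is in the only substantive step, the uniform norm bound, and it is a real one. Your decomposition handles two regimes: near zeros of $f$ you invoke ``continuity of $g$ on $\mathbb{D}^n$'', and away from them you use that $f$ is bounded below together with $fg\in A_{\bm{\beta},n}^2$. This covers zeros of $f$ lying in a fixed compact subset of $\mathbb{D}^n$, and the region where $|f|$ is bounded below, but it misses the hard case: for $n\ge 2$ the zero variety of $f$ typically meets $\overline{\mathbb{D}}^n$ in a set reaching the boundary (take $f=\tfrac32-\zeta_1-\zeta_2$, whose zero set passes through $(\tfrac34,\tfrac34)$ and approaches points of $\mathbb{T}\times\mathbb{D}$). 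Near such boundary zeros $g=fg/f$ need not be bounded --- continuity of $g$ on the open polydisk gives no uniform control there --- and $f$ is not bounded below either, so neither of your regimes applies exactly where the estimate is delicate. Balancing the small measure of the bad region against the blow-up of $\sup_{|\zeta|\le r}|fg|$ is not carried out in your sketch and is not obviously possible.

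The hypotheses that $f$ extends holomorphically past $\overline{\mathbb{D}}^n$ and that $f(0)\neq 0$ are there precisely to make a slice-by-slice argument work: for each $\zeta$ one considers the one-variable function $t\mapsto f(t\zeta)$ on a disk of radius $1+\delta$; Jensen's formula and $f(0)\neq 0$ bound the number of its zeros in $\overline{\mathbb{D}}$ uniformly in $\zeta$, and the ratio $f(\zeta)/f(r\zeta)$ is then controlled through the finite product over these zeros, with the singular factors handled in an integrated rather than pointwise sense via the norm formula of Section 2.1. This is the content of the argument the paper points to in \cite{DG}, and it is not routine; note in particular that your sketch never uses $f(0)\neq 0$ in any essential way, which is a sign that the decisive mechanism is missing. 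As written, the key inequality $\sup_{r}\|fg_r\|<\infty$ remains unproved.
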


We now give the proof of Theorem \ref{thm201}.

\vskip2mm

\noindent\textbf{Proof of Theorem \ref{thm201}.} As mentioned above, we have obtained the inclusion in one direction. The opposite inclusion under the assumption $\varphi(0)\neq0$ can be viewed from Lemma \ref{thm203}. In fact, suppose that $\varphi f\in A_{\bm{\beta},n}^2$ for some holomorphic function $f$ in $\mathbb{D}^n$. By Lemma \ref{thm203}, $\{\varphi f_r\}$ converges to $\varphi f$ weakly in $A_{\bm{\beta},n}^2$ as $r\rightarrow1^-$, and thus $\varphi f\in[\varphi]_n$.

For the general case, we may assume without loss of generality that $\varphi\not \equiv0$. Choose a point $\xi=(\xi_1,\cdots,\xi_n)\in\mathbb{D}^n$ with $\varphi(\xi)\neq0$. It is easy to see that the image of $\varphi$ under the standard unitary operator
$$(Wf)(\zeta)=f\left(\frac{\xi_1-\zeta_1}{1-\overline{\xi_1}\zeta_1},\cdots,\frac{\xi_n-\zeta_n}
{1-\overline{\xi_n}\zeta_n}\right)
\cdot\prod\limits_{k=1}^{n}\left(\frac{\sqrt{1-|\xi_{k}|^{2}}}{1-\bar{\xi}_{k}\zeta_{k}}\right)^{2+\beta_{k}},\quad f\in A_{\bm{\beta},n}^2$$
is also holomorphic in some neighborhood of $\overline{\mathbb{D}}^n$, and $(W\varphi)(0)\neq0$. By the proved conclusion,
$$[W\varphi]_n=
\left\{(W\varphi)f\in A_{\bm{\beta},n}^2:f\ \mathrm{is}\ \mathrm{holomorphic}\ \mathrm{in}\ \mathbb{D}^n\right\}.$$
On the other hand, since $W$ maps $\varphi H^\infty(\mathbb{D}^n)$ onto $(W\varphi)H^\infty(\mathbb{D}^n)$, it follows that $W[\varphi]_n=[W\varphi]_n$. Since $W$ is adjoint,
$$[\varphi]_n=W[W\varphi]_n=\left\{\varphi f\in A_{\bm{\beta},n}^2:f\ \mathrm{is}\ \mathrm{holomorphic}\ \mathrm{in}\ \mathbb{D}^n\right\}.$$
The proof is complete. $\hfill\square$

\vskip2mm

The following lemma will be used to prove Lemma \ref{thm309} and Lemma \ref{thm310} in Section 3.

\begin{lem}\label{thm204}
Suppose $-1<\beta<\infty$, and $\{b_{n}\}_{n\geq0}$ is a sequence of real numbers. If
\begin{equation}\label{equ201}
\sum\limits_{n=0}^{\infty}\frac{|b_{n}|n!}{\Gamma(n+\beta+2)}<\infty,
\end{equation}
and for each nonnegative integer $k$,
$$\sum\limits_{n=0}^{\infty}\frac{b_{n}(n+k)!}{\Gamma(n+k+\beta+2)}=0,$$
then $b_{n}=0\ (n\geq0)$.
\end{lem}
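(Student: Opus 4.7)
The plan is to recognize the weights $\frac{n!}{\Gamma(n+\beta+2)}$ as Beta-function integrals, which converts both the summability hypothesis and the vanishing moment conditions into a single integral identity on $[0,1]$; the conclusion then follows from the determinacy of the Hausdorff moment problem combined with analyticity of $\sum b_n z^n$.

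Concretely, I would first observe that by the Beta function identity
\[
\frac{m!}{\Gamma(m+\beta+2)}=\frac{1}{\Gamma(\beta+1)}\int_0^1 t^m (1-t)^{\beta}\,dt,\qquad m\ge 0.
\]
Applying this with $m=n+k$, each of the moment sums can be (formally) rewritten as
\[
\sum_{n=0}^{\infty}\frac{b_{n}(n+k)!}{\Gamma(n+k+\beta+2)}=\frac{1}{\Gamma(\beta+1)}\int_{0}^{1}t^{k}f(t)(1-t)^{\beta}\,dt,
\]
where $f(t)=\sum_{n\ge 0}b_n t^n$. To justify swapping sum and integral I would use condition (\ref{equ201}): since the ratio $\frac{(n+k)!/\Gamma(n+k+\beta+2)}{n!/\Gamma(n+\beta+2)}\to 1$ as $n\to\infty$, the majorant $g(t)=\sum|b_n|t^n$ is in $L^{1}((1-t)^{\beta}\,dt)$, so Fubini applies for every $k$.

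Next, the summability (\ref{equ201}) forces $|b_n|n!/\Gamma(n+\beta+2)\to 0$, and from $\Gamma(n+\beta+2)/n!\sim n^{\beta+1}$ I get $|b_n|=O(n^{\beta+1})$. Consequently $f(z)=\sum b_n z^n$ is holomorphic on the open unit disk. The hypothesis now reads
\[
\int_{0}^{1}t^{k}\,h(t)\,dt=0,\qquad k=0,1,2,\dots,
\]
where $h(t)=f(t)(1-t)^{\beta}\in L^{1}([0,1],dt)$.

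Finally, by the Weierstrass approximation theorem, $\int_0^1 \phi(t)h(t)\,dt=0$ for every $\phi\in C([0,1])$ (dominated convergence supplies the passage to the limit since $h\in L^{1}$). Standard measure-theoretic reasoning (approximating indicators of open sets by continuous functions) then yields $h=0$ a.e.\ on $[0,1]$. Because $(1-t)^{\beta}>0$ on $[0,1)$, this forces $f(t)=0$ for a.e.\ $t\in[0,1)$, and by the identity theorem for the holomorphic function $f$ on $\mathbb{D}$, $f\equiv 0$, hence $b_n=0$ for all $n\ge 0$. The only genuinely technical step is verifying absolute integrability/comparability to legitimize the Fubini exchange; everything else is routine once the Beta integral representation is in place.
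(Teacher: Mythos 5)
Your proof is correct and follows essentially the same route as the paper: both convert the weights into Beta integrals, reduce the hypotheses to $\int_0^1 t^k f(t)(1-t)^\beta\,dt=0$ for all $k$, and conclude via density of polynomials in $C([0,1])$ that $f(1-t)^\beta$ vanishes a.e., hence $f\equiv 0$. Your justification of the sum--integral interchange (comparing the $k$-th weights to the $k=0$ weights) is in fact slightly more explicit than the paper's.
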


\begin{proof}
Write $\psi(t)=\sum_{n=0}^{\infty}b_{n}t^{n}$. For each $t\in[0,1)$, it follows that $t^n\leq\frac{n!}{\Gamma(n+\beta+2)}$ for $n\in\mathbb{N}$ sufficiently large. Combining this with (\ref{equ201}) yields the power series $\sum_{n=0}^{\infty}b_{n}t^{n}$ converges, and thus $\psi$ is continuous on $[0,1)$.

Put $\varphi(t)=(1-t)^{\beta}\psi(t)$. Then $\varphi$ is continuous on $[0,1)$, and it follows from (\ref{equ201}) that $\varphi$ is integrable over $[0,1]$. Since for each nonnegative integer $k$,
$$\int_{0}^{1}t^{k}\varphi(t)dt=\sum\limits_{n=0}^{\infty}b_{n}\frac{(n+k)!\Gamma(\beta+1)}{\Gamma(n+k+\beta+2)}=0,$$
we conclude that for each real-valued continuous function $f$ on $[0,1]$,
$$\int_{0}^{1}f(t)\varphi(t)dt=0.$$
It follows from the Riesz representation theorem that $\varphi\equiv0$ on $[0,1)$. Therefore $\psi\equiv0$ on $[0,1)$, which leads to $b_{n}=0\ (n\geq0)$.
\end{proof}

\section{The unitary equivalence of submodules generated by polynomials}

This section is devoted to the unitary equivalence of submodules of $A_{\bm{\beta}}^2$ generated by polynomials.

To begin, some notions related to polynomials are needed. Suppose $S$ is a subset of $\mathbb{N}$. For each $n\in\mathbb{N}$, we write $S(n)=S\cap\{1,2,\cdots,n\}$. As in the case of finitely many variables, every nonzero ideal $\mathcal{I}$ of $\mathcal{P}_S$ has a Beurling form $\mathcal{I}=p\mathcal{L}$, where $p$ is the greatest common divisor of $\mathcal{I}$, and $\mathcal{L}$ is an ideal of $\mathcal{P}_S$ without nontrivial common divisor. Indeed, for  $n\in\mathbb{N}$ sufficiently large, $\mathcal{I}\cap\mathcal{P}_{S(n)}$ is a nonzero ideal of $\mathcal{P}_{S(n)}$, and thus has its greatest common divisor $p_n$. This means that $p_{n+1}|p_n$, and thus the degrees of $p_n$ are descending. This shows that there exists a positive integer $m$ such that $\deg p_n=\deg p_m$ for any $n>m$. Thus $p_m$ is the greatest common divisor of $\mathcal{I}$ as desired.

Now we define an equivalence relation on polynomial ring $\mathcal{P}_\infty$. Let $\mathbb{D}^\infty=\mathbb{D}\times\mathbb{D}\times\cdots$ and $\mathbb{T}^\infty=\mathbb{T}\times\mathbb{T}\times\cdots$ be the cartesian product of countably infinitely many open unit disks $\mathbb{D}$ and unit circles $\mathbb{T}$, respectively. For each $F\in A_{\bm{\beta}}^2$, we write
$$\mathbb{N}_F=\left\{n\in\mathbb{N}: \frac{\partial F}{\partial\zeta_{n}}\not\equiv0\right\}.$$
Following \cite{Guo1}, say two polynomials $p,q\in\mathcal{P}_\infty$ are modulus equivalent, if there are two polynomials $r,s\in\mathcal{P}_{\infty}$ without zero point in $\mathbb{D}^\infty$, such that $|pr|=|qs|$ on $\mathbb{T}^\infty$. In this case, write $p\asymp q$. It is routine to check that the binary relation $\asymp$ on $\mathcal{P}_\infty$ is an equivalence relation. By Hurwitz's theorem, if a polynomial $u\in\mathcal{P}_{n}$ is zero-free in $\mathbb{D}^n$, then there is a $\zeta\in\mathbb{T}$ such that $u(\cdot,\zeta)\in\mathcal{P}_{n-1}$ is zero-free in $\mathbb{D}^{n-1}$. Therefore, when $p,q\in\mathcal{P}_\infty$ are modulus equivalent, the corresponding $r,s$ can be chosen from $\mathcal{P}_{\mathbb{N}_{p}\cup\mathbb{N}_{q}}$.

Our main result in this paper  is as follows, which gives a complete classification under unitary equivalence for submodules of
$A_{\bm{\beta}}^2$ generated by polynomials. Here, we mention a simple fact that if  $M$ is a submodule of $A_{\bm{\beta}}^2$, then $M\cap\mathcal{P}_{\infty}$ is an ideal of $\mathcal{P}_{\infty}$, see \cite{DGH}.

\begin{thm}\label{thm301}
Suppose that $\bm{\beta}=\{\beta_{n}\}_{n\in\mathbb{N}}$ is a sequence in $[-1,\infty)$ and $H=\{n\in\mathbb{N}: \beta_{n}=-1\}$. Let $M$, $N$ be two nonzero submodules of $A_{\bm{\beta}}^2$ generated by polynomials, and let
$M\cap\mathcal{P}_{\infty}=p\mathcal{K}$, $N\cap\mathcal{P}_{\infty}=q\mathcal{L}$ be the Beurling forms of ideals $M\cap\mathcal{P}_{\infty}$ and $N\cap\mathcal{P}_{\infty}$, respectively. Then the following are equivalent:

(1) $M$ is unitarily equivalent to $N$;

(2) $\mathcal{K}=\mathcal{L}$ and the polynomials $p,q$ admit factorizations $p=r\varphi$, $q=s\varphi$, where $r,s\in\mathcal{P}_{H}$, $\varphi\in\mathcal{P}_{\infty}$, and $r\asymp s$;

(3) there exist an ideal $\mathcal{G}$ of $\mathcal{P}_\infty$, and polynomials $\tilde{p},\tilde{q}\in\mathcal{P}_{H}$ with $|\tilde{p}|=|\tilde{q}|$ on $\mathbb{T}^\infty$ such that $M$ and $N$ are the closures of $\tilde{p}\mathcal{G}$ and $\tilde{q}\mathcal{G}$, respectively.
\end{thm}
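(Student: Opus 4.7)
The plan is to establish the implication cycle $(3)\Rightarrow(1)\Rightarrow(2)\Rightarrow(3)$, with $(1)\Rightarrow(2)$ as the main technical step. For $(3)\Rightarrow(1)$, I use the canonical decomposition $A_{\bm{\beta}}^2 = H^2(\mathbb{D}_{2}^{H})\otimes A_{\bm{\beta},B}^2$. Since $\tilde{p},\tilde{q}\in\mathcal{P}_{H}$ act only on the first tensor factor, the hypothesis $|\tilde{p}|=|\tilde{q}|$ on $\mathbb{T}^{\infty}$, combined with the boundary formula $\|F\|^{2}=\sup_{n}\sup_{r}\int_{\Xi_{n}}|E_{n}F|^{2}$ from the preliminaries, yields $\|\tilde{p}g\|=\|\tilde{q}g\|$ for every $g\in\mathcal{P}_{\infty}$. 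Defining $U:\tilde{p}g\mapsto\tilde{q}g$ on $\tilde{p}\mathcal{G}$ gives an isometry that commutes with multiplication by every polynomial; it then extends by continuity to the desired unitary module map $M\to N$.

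For $(2)\Rightarrow(3)$, the Hurwitz-based observation preceding the theorem supplies $f,g\in\mathcal{P}_{\mathbb{N}_{r}\cup\mathbb{N}_{s}}\subset\mathcal{P}_{H}$ zero-free in $\mathbb{D}^{\infty}$ with $|rf|=|sg|$ on $\mathbb{T}^{\infty}$. I set $\tilde{p}=rf$, $\tilde{q}=sg$ (both in $\mathcal{P}_{H}$ with $|\tilde{p}|=|\tilde{q}|$ on $\mathbb{T}^{\infty}$) and take $\mathcal{G}=\varphi\mathcal{K}$. The inclusion $\overline{\tilde{p}\mathcal{G}}\subset M$ is immediate from $\tilde{p}\mathcal{G}=rf\varphi\mathcal{K}\subset r\varphi\mathcal{K}=p\mathcal{K}\subset M$. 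The opposite inclusion reduces to showing that multiplication by the zero-free polynomial $f\in\mathcal{P}_{H}$ has dense range on the Hardy tensor factor of $M$; this follows from an outer-function argument, applying Theorem \ref{thm201} to the truncations $E_{n}F$ and using $\|F-E_{n}F\|\to 0$.

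The heart of the proof is $(1)\Rightarrow(2)$. Given a unitary module map $U:M\to N$, I first show $U$ is itself a multiplication operator. Applying Theorem \ref{thm201} through the projections $E_{n}$, every polynomial element $\psi=pk$ of $M$ with $k\in\mathcal{K}$ satisfies $U\psi=q\tilde{k}$ for some $\tilde{k}$ holomorphic in $\mathbb{D}_{2,\bm{\beta}}^{\infty}$. Varying $k$ and using $U(a\psi)=aU\psi$ for $a\in\mathcal{P}_{\infty}$, one verifies that $h:=U\psi/\psi=q\tilde{k}/(pk)$ is independent of the choice of $\psi$, so $UF=hF$ for every $F\in M$ with $\|hF\|=\|F\|$. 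I then separate the $H$- and $B$-variables: along the $B$-direction, Lemma \ref{thm204} applied coefficient-wise to the expansion of $h$ over $\mathbb{Z}_{+}^{B}$, combined with the isometric identity, forces $h$ to be independent of the $B$-variables after fixing the $H$-variables on the torus; this is the infinite-variable analog of the Bergman rigidity of \cite{Ri,Pu,CG}. Consequently every $B$-variable factor of $p$ must also occur in $q$ and vice versa, yielding a common factor $\varphi\in\mathcal{P}_{\infty}$ with $p=r\varphi$, $q=s\varphi$, $r,s\in\mathcal{P}_{H}$. On the $H$-variables, the unimodularity $|h|=1$ on $\mathbb{T}^{\infty}$ translates exactly into $r\asymp s$; applying the same analysis to $U^{*}$ yields $\mathcal{K}=\mathcal{L}$.

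The main obstacle will be the extraction of the global multiplier $h$ in the $(1)\Rightarrow(2)$ step: coherently piecing together the finite-variable factorizations from Theorem \ref{thm201} into a single multiplier on $A_{\bm{\beta}}^{2}$, while handling the non-Noetherian ring $\mathcal{P}_{\infty}$ (no primary decomposition, no ascending chain condition on the ideals $\mathcal{K},\mathcal{L}$). Lemma \ref{thm204} is the crucial analytic tool for the Bergman-direction rigidity that isolates the common factor $\varphi$; its integral-versus-coefficient structure is precisely what lets us bypass the algebraic rigidity arguments of \cite{Pu,CG} and obtain a proof that, by restriction, also simplifies the Hardy-module argument of \cite{Guo1}.
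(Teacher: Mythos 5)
Your cycle $(3)\Rightarrow(1)\Rightarrow(2)\Rightarrow(3)$ matches the paper's, your $(3)\Rightarrow(1)$ and $(2)\Rightarrow(3)$ arguments are essentially the ones given there (for $(2)\Rightarrow(3)$ the paper avoids any outer-function argument: it simply writes $pk=pfk\cdot\frac{1}{f}$ and invokes Theorem \ref{thm201} in finitely many variables to get $[p\mathcal{K}]=[pf\mathcal{K}]$), and you have correctly identified Lemma \ref{thm204} and Theorem \ref{thm201} as the key tools. But the implication $(1)\Rightarrow(2)$, which you rightly call the heart of the proof, has genuine gaps. First, your opening move --- writing $U\psi=q\tilde{k}$ for a polynomial $\psi=pk$ by ``applying Theorem \ref{thm201} through the projections $E_n$'' --- presupposes that $U\psi$ lives in a finite-variable slice $A^2_{\bm{\beta},S(k)}$. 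That is not automatic: a priori $U\psi$ could depend on infinitely many variables, and then no finite-variable factorization theorem applies. The paper devotes Lemma \ref{thm307} to exactly this point, proving $U\bigl(M\cap A_{\bm{\beta},S(k)}^{2}\bigr)=N\cap A_{\bm{\beta},S(k)}^{2}$ by a delicate argument with the multiplicative projections $E_{S(k)}$ and the identity $U^{-1}(rG)=G\cdot U^{-1}r$. Without this localization your global multiplier $h$ cannot be constructed. Second, ``Lemma \ref{thm204} applied coefficient-wise to the expansion of $h$'' is not well-posed: $h$ is only a ratio of module elements and need not belong to $A^2_{\bm{\beta}}$, so it has no expansion over $\mathbb{Z}_{+}^{B}$ to which the lemma could be applied. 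The paper instead applies Lemma \ref{thm204} to the coefficients of $F$ and $UF$ for $F$ ranging over $M$ (Lemmas \ref{thm309} and \ref{thm310}), comparing $\|\zeta^{\gamma}F\|$ with $\|\zeta^{\gamma}UF\|$ for all $B$-monomials $\zeta^{\gamma}$; it is these moment identities, not an expansion of $h$, that yield the Bergman-direction rigidity.

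The most serious gap is your final step: the claim that unimodularity of the symbol on the torus ``translates exactly into $r\asymp s$.'' The relation $r\asymp s$ demands the existence of \emph{polynomials} $u,v$ zero-free in $\mathbb{D}^{\infty}$ with $|ru|=|sv|$ on $\mathbb{T}^{\infty}$; passing from a measurable unimodular multiplier $\eta$ with $\eta[r]\subset[q]$ to this algebraic statement is precisely the hard content of the finite-variable Hardy classification, which the paper obtains by first extending $U$ to a unitary module map $[r]\to[s]$ between principal submodules (Proposition \ref{thm306}, itself requiring Lemma \ref{thm308}, the codimension-two removable singularity argument on $Z(\mathcal{K}\cap\mathcal{P}_n)$, and \cite[Lemma 1]{ACD}), localizing via Lemma \ref{thm307}, and then invoking \cite[Corollary 3.6]{Guo1}. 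Your proposal neither cites that result nor supplies a substitute, so the implication is not established. Relatedly, the common factor $\varphi$ is not extracted in the paper by matching ``$B$-variable factors'' of $p$ and $q$; it arises as $p/r$ where $r\in\mathcal{P}_{H}$ is the greatest common divisor of the Hardy-part ideal $[\mathcal{E}_{\mathcal{I}}]\cap\mathcal{P}_{\infty}$, and the identities $\varphi=\psi$, $\mathcal{K}=\mathcal{L}$ only fall out at the end from $\varphi\mathcal{K}=\psi\mathcal{L}$ together with triviality of the gcds --- a mechanism your sketch does not reproduce.
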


More generally, we have the following theorem.

\begin{thm}\label{thm302}
Suppose that $\bm{\beta}=\{\beta_{n}\}_{n\in\mathbb{N}}$ is a sequence in $[-1,\infty)$, $S$ is a nonempty subset of $\mathbb{N}$, and $H=\{n\in\mathbb{N}: \beta_{n}=-1\}$. Let $M$, $N$ be two nonzero submodules of $A_{\bm{\beta},S}^2$ generated by some polynomials in $\mathcal{P}_{S}$, and let $M\cap\mathcal{P}_{S}=p\mathcal{K}$, $N\cap\mathcal{P}_{S}=q\mathcal{L}$ be the Beurling forms of ideals $M\cap\mathcal{P}_{S}$ and $N\cap\mathcal{P}_{S}$, respectively. Then the following are equivalent:

(1) $M$ is unitarily equivalent to $N$;

(2) $\mathcal{K}=\mathcal{L}$ and the polynomials $p,q$ admit factorizations $p=r\varphi$, $q=s\varphi$, where $r,s\in\mathcal{P}_{S\cap H}$, $\varphi\in\mathcal{P}_{S}$, and $r\asymp s$;

(3) there exist an ideal $\mathcal{G}$ of $\mathcal{P}_S$, and polynomials $\tilde{p},\tilde{q}\in\mathcal{P}_{S\cap H}$ with $|\tilde{p}|=|\tilde{q}|$ on $\mathbb{T}^\infty$ such that $M$ and $N$ are the closures of $\tilde{p}\mathcal{G}$ and $\tilde{q}\mathcal{G}$, respectively.
\end{thm}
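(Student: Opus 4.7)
The plan is to prove the cycle $(2) \Rightarrow (3) \Rightarrow (1) \Rightarrow (2)$, deferring the hard direction to the end. The tools available are Theorem \ref{thm201}, Lemma \ref{thm204}, and the integral representation of $\|\cdot\|$ given at the end of Section 2.1.

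For $(3) \Rightarrow (1)$, I would define $\Phi(\tilde{p}f) = \tilde{q}f$ for $f \in \mathcal{G}$. This is well-defined because multiplication by $\tilde{p}$ is injective on $A_{\bm{\beta},S}^2$, and is a module map because $\mathcal{G}$ is an ideal. For isometry: since $\tilde{p},\tilde{q} \in \mathcal{P}_{S \cap H}$ depend only on Hardy coordinates, writing $\tilde{p}f$ as a polynomial in variables indexed by $\{1,\dots,n\}$ and applying the norm formula, one integrates those Hardy coordinates over $\mathbb{T}$, where $|\tilde{p}| = |\tilde{q}|$ by hypothesis; so $|\tilde{p}f|^2 = |\tilde{q}f|^2$ pointwise on $\Xi_n$ and $\|\tilde{p}f\| = \|\tilde{q}f\|$. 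Extending by continuity yields the desired unitary $M \to N$.

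For $(2) \Rightarrow (3)$, I would invoke the remark following Hurwitz's theorem (in the paragraph preceding Theorem \ref{thm301}) to produce $u,v \in \mathcal{P}_{\mathbb{N}_{r} \cup \mathbb{N}_{s}} \subset \mathcal{P}_{S \cap H}$ without zeros in $\mathbb{D}^\infty$ and with $|ru| = |sv|$ on $\mathbb{T}^\infty$. Set $\tilde{p} = ru$, $\tilde{q} = sv$, and $\mathcal{G} = \varphi\mathcal{K}$ (an ideal of $\mathcal{P}_S$). The key claim $\overline{u \cdot r\varphi\mathcal{K}} = \overline{r\varphi\mathcal{K}}$ follows from Theorem \ref{thm201} applied to $u$ in its finite-variable slice: since $u$ is zero-free on $\mathbb{D}^n$ for some $n$, the theorem gives $[u]_n = A_{\bm{\beta},n}^2$, so there exist polynomials $q_j$ with $u q_j \to 1$; then $u (f q_j) \to f$ for every $f \in r\varphi\mathcal{K}$, giving $r\varphi\mathcal{K} \subset \overline{u \cdot r\varphi\mathcal{K}}$, while the reverse inclusion is trivial. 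The $N$-side is handled identically.

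For $(1) \Rightarrow (2)$, the main obstacle, let $\Phi: M \to N$ be a unitary module map. Commutation with the $\mathcal{P}_S$-action gives $\Phi(ph) = h\Phi(p)$ for $h \in \mathcal{K}$ and $\Phi^{-1}(qh) = h\Phi^{-1}(q)$ for $h \in \mathcal{L}$, so $\Phi$ is completely determined by $\Phi(p) \in N$ and $\Phi^{-1}(q) \in M$. Since $N \subset \overline{q\mathcal{P}_S}$ and $M \subset \overline{p\mathcal{P}_S}$, Theorem \ref{thm201} applied slicewise yields meromorphic symbols $\eta,\xi$ with $\Phi(p) = q\eta$, $\Phi^{-1}(q) = p\xi$, and $\eta\xi \equiv 1$. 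The crux is showing $\eta$ is independent of every Bergman coordinate $n \in S \cap B$: for each such $n$, the isometry identities $\|\zeta_n^k ph\|^2 = \|\zeta_n^k q\eta h\|^2$ with $h \in \mathcal{K}$ and $k \geq 0$ translate, via the norm formula, into moment identities for the $\zeta_n$-Fourier coefficients of $\eta$ integrated against the weight $(1-|\zeta_n|^2)^{\beta_n}$; Lemma \ref{thm204} then forces all positive-index coefficients to vanish. Iterating over $n \in S \cap B$ shows $\eta$ is rational in the Hardy variables, say $\eta = s/r$ in lowest terms with $r,s \in \mathcal{P}_{S \cap H}$. The identity $r\Phi(p) = sq$ combined with the Beurling-form structure then produces $p = r\varphi$, $q = s\varphi$ for a common $\varphi \in \mathcal{P}_S$ and forces $\mathcal{K} = \mathcal{L}$; finally, reading the isometry on the Hardy boundary $\mathbb{T}^\infty$ delivers $r \asymp s$. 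The principal obstacle in this direction is exactly the vanishing step: without Noetherianness of $\mathcal{P}_\infty$ no uniform GCD argument is available, and Lemma \ref{thm204} serves as the moment-problem substitute that lets us suppress each Bergman coordinate one at a time.
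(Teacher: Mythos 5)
Your directions $(2)\Rightarrow(3)$ and $(3)\Rightarrow(1)$ are essentially the paper's arguments and are fine, up to one imprecision: in $(3)\Rightarrow(1)$ the equality $|\tilde{p}f|=|\tilde{q}f|$ does \emph{not} hold pointwise on the dilated slices $(r\zeta_1,\dots,r\zeta_n)$, since $|\tilde{p}|=|\tilde{q}|$ is only assumed on $\mathbb{T}^\infty$ and the Hardy coordinates sit at radius $r<1$ there; one should instead use the orthogonal expansion $\|\tilde{p}g\|^2=\sum_{\alpha}\|\tilde{p}\,C_\alpha g\|^2\|\zeta^\alpha\|^2$ together with the boundary-value identity for $H^2(\mathbb{D}_2^{H})$, which is how the paper argues.

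The direction $(1)\Rightarrow(2)$, however, has genuine gaps. First, $\Phi(p)$ is undefined: $M\cap\mathcal{P}_S=p\mathcal{K}$ does not put $p$ itself in $M$ unless $\mathcal{K}=\mathcal{P}_S$, so you cannot start from ``$\Phi$ is determined by $\Phi(p)$.'' Producing a unitary module map on $[p]$ extending the given one is precisely the content of the paper's Proposition \ref{thm306}, and it is not free: the paper first replaces $M$ by the submodule generated by the ideal $\mathcal{E}_{M\cap\mathcal{P}_\infty}=\{C_\alpha w\}$ of Hardy-variable coefficient polynomials (using Lemma \ref{thm309} to see that $C_\alpha w\mapsto C_\alpha Uw$ is isometric), localizes to finitely many variables via Lemma \ref{thm307}, invokes the Agrawal--Clark--Douglas unimodular-symbol lemma and Guo's lemma to get $\eta p\in H^\infty$, and then uses a codimension-$2$ removable singularity argument on $Z(\mathcal{K}\cap\mathcal{P}_n)$ to conclude $\eta p\in[q]_n$. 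None of this is replaced by anything in your sketch. Second, even granting a symbol $\eta$ with $\Phi=M_\eta$, your claim that after suppressing the Bergman variables $\eta$ is \emph{rational}, ``$\eta=s/r$ in lowest terms,'' is unjustified and is not what happens: $\eta$ is only a unimodular measurable function on $\mathbb{T}^{H}$, and the polynomials $r,s$ arise not from a partial-fraction form of $\eta$ but as greatest common divisors of the ideals $[\mathcal{E}_{\mathcal{I}}]\cap\mathcal{P}_\infty$ and $[\mathcal{E}_{\mathcal{J}}]\cap\mathcal{P}_\infty$, with $r\asymp s$ then imported from the finite-variable Hardy classification of \cite{Guo1}. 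Your idea of using Lemma \ref{thm204} to kill the Bergman-variable dependence is indeed the paper's mechanism (Lemmas \ref{thm309} and \ref{thm310}), but note that applying Lemma \ref{thm204} requires verifying the summability hypothesis (\ref{equ201}), which the paper obtains by a Fatou-type argument and which your sketch omits.
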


By taking $S=\{1,2,\cdots,n\}$ in Theorem \ref{thm302}, we obtain  the  version of Theorem \ref{thm301} in  finite-variable setting. In this paper, we only give the proof of Theorem \ref{thm301}. Our methods are also valid to prove Theorem \ref{thm302}.

Before proving Theorem \ref{thm301}, we give some applications. Corollary \ref{thm303} below generalizes \cite[Corollary 3]{ACD}. It shows the rigidity of finite codimensional submodules of $A_{\bm{\beta}}^2$.

\begin{cor}\label{thm303}
If $M$ and $N$ are submodules of finite codimension in $A_{\bm{\beta}}^2$, then they are unitarily equivalent only when they coincide.
\end{cor}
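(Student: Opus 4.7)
The strategy is to reduce the corollary to Theorem \ref{thm301} by a two-step argument: first verify that finite codimensional submodules are polynomially generated (so Theorem \ref{thm301} is applicable), and then use finite codimension to force the greatest common divisor in the Beurling form to be a nonzero constant.

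For the first step, suppose $M$ has codimension $k$ in $A_{\bm{\beta}}^2$. Since $\mathcal{P}_\infty$ is dense in $A_{\bm{\beta}}^2$ and $A_{\bm{\beta}}^2/M$ is finite dimensional, the composition $\mathcal{P}_\infty\hookrightarrow A_{\bm{\beta}}^2\to A_{\bm{\beta}}^2/M$ has dense image in a finite-dimensional space and is therefore surjective, whence $\mathcal{P}_\infty/(M\cap\mathcal{P}_\infty)\cong A_{\bm{\beta}}^2/M$ has dimension $k$. Picking $p_1,\dots,p_k\in\mathcal{P}_\infty$ whose cosets form a basis of $A_{\bm{\beta}}^2/M$, I would take any $f\in M$ and a polynomial approximation $f_n\to f$, write $f_n=g_n+\sum_i c_{n,i}p_i$ with $g_n\in M\cap\mathcal{P}_\infty$, and reduce modulo $M$ to force $c_{n,i}\to 0$; then $g_n\to f$, so $f\in\overline{M\cap\mathcal{P}_\infty}$. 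Hence $M=\overline{M\cap\mathcal{P}_\infty}$, and the same for $N$.

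Now Theorem \ref{thm301} applies: writing the Beurling forms $M\cap\mathcal{P}_\infty=p\mathcal{K}$ and $N\cap\mathcal{P}_\infty=q\mathcal{L}$, condition (2) yields $\mathcal{K}=\mathcal{L}$. The crux, which I expect to be the main obstacle, is to show that the gcd $p$ (and similarly $q$) must be a nonzero constant. Suppose, for contradiction, that $p$ has positive degree in some variable $\zeta_{j_0}$. Since $p\in\mathcal{P}_n$ for some finite $n$, there exists $j\neq j_0$ such that $p$ does not depend on $\zeta_j$. If a finite linear combination $\sum_{k=0}^{N}c_k\zeta_j^k$ belonged to $p\mathcal{K}\subseteq (p)$, then $p$ would divide $\sum_k c_k\zeta_j^k$ in the UFD $\mathcal{P}_\infty$; but $p$ has positive degree in $\zeta_{j_0}$ while $\sum_k c_k\zeta_j^k$ has degree zero in $\zeta_{j_0}$, so the divisibility forces $\sum_k c_k\zeta_j^k=0$, i.e., all $c_k=0$. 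Thus $\{\zeta_j^k\}_{k\ge 0}$ is linearly independent modulo $M\cap\mathcal{P}_\infty$, contradicting its finite codimension in $\mathcal{P}_\infty$. Therefore $p$ is a nonzero constant, and by symmetry so is $q$.

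With $p$ and $q$ constants, the Beurling forms collapse to $M\cap\mathcal{P}_\infty=\mathcal{K}$ and $N\cap\mathcal{P}_\infty=\mathcal{L}$, while Theorem \ref{thm301}(2) gave $\mathcal{K}=\mathcal{L}$; hence $M=\overline{M\cap\mathcal{P}_\infty}=\overline{N\cap\mathcal{P}_\infty}=N$, completing the proof.
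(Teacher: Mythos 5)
Your proof is correct and follows essentially the same route as the paper: reduce to Theorem \ref{thm301} by showing that $M$ and $N$ are generated by the finite-codimensional ideals $M\cap\mathcal{P}_\infty$ and $N\cap\mathcal{P}_\infty$, whose greatest common divisors must then be trivial constants, so that the theorem forces $M\cap\mathcal{P}_\infty=N\cap\mathcal{P}_\infty$ and hence $M=N$. The only difference is that you give self-contained arguments (the density/quotient argument for polynomial generation, and the degree argument in a spare variable $\zeta_j$ for triviality of the gcd) for the two facts the paper simply cites from \cite{DGH} and \cite{DPSY} or asserts without proof.
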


\begin{proof}
A similar proof as in \cite[Theorem 3.2]{DGH} (or see \cite[Proposition 2.4]{DPSY}) shows that submodules $M$, $N$ are generated by $M\cap\mathcal{P}_\infty$ and $N\cap\mathcal{P}_\infty$, respectively, and the codimensions of both ideals  $M\cap\mathcal{P}_\infty$ and  $N\cap\mathcal{P}_\infty$   in  $\mathcal{P}_\infty$ are  finite.  This implies that each of this two ideals is of trivial greatest common divisor. By Theorem \ref{thm301}, $M\cap\mathcal{P}_\infty=N\cap\mathcal{P}_\infty$, and hence $M=N$.
\end{proof}

Applying Theorem \ref{thm301} to the Hardy module $H^{2}(\mathbb{D}_{2}^{\infty})$, we obtain the following.

\begin{cor}\label{thm304}
Let $M$, $N$ be two nonzero submodules of $H^2(\mathbb{D}_2^\infty)$ generated by polynomials, and let $M\cap\mathcal{P}_\infty=p\mathcal{K}$, $N\cap\mathcal{P}_\infty=q\mathcal{L}$ be the Beurling forms of ideals $M\cap\mathcal{P}_\infty$ and $N\cap\mathcal{P}_\infty$, respectively. Then $M$ is unitarily equivalent to $N$ if and only if $p\asymp q$ and $\mathcal{K}=\mathcal{L}$.
\end{cor}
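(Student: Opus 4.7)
The plan is to deduce Corollary \ref{thm304} directly from Theorem \ref{thm301} in the case $\bm{\beta}=\{-1\}_{n\in\mathbb{N}}$, where $H=\mathbb{N}$ and therefore $\mathcal{P}_H=\mathcal{P}_\infty$. Under this specialization, condition (2) of Theorem \ref{thm301} reads: $\mathcal{K}=\mathcal{L}$ together with factorizations $p=r\varphi$, $q=s\varphi$ in $\mathcal{P}_\infty$ with $r\asymp s$. Thus the corollary will follow once I verify that, given $\mathcal{K}=\mathcal{L}$, this factorization condition is equivalent to the simpler requirement $p\asymp q$.

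The direction \emph{Theorem \ref{thm301}(2) $\Rightarrow$ $p\asymp q$} is immediate. If $u,v\in\mathcal{P}_\infty$ witness $|ru|=|sv|$ on $\mathbb{T}^\infty$ and are zero-free in $\mathbb{D}^\infty$, then multiplying this identity through by $|\varphi|$ gives $|pu|=|qv|$ on $\mathbb{T}^\infty$, so $p\asymp q$.

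For the converse, suppose $p\asymp q$ with witnesses $u,v\in\mathcal{P}_\infty$ satisfying $|pu|=|qv|$ on $\mathbb{T}^\infty$ and zero-free in $\mathbb{D}^\infty$. Since every element of $\mathcal{P}_\infty$ depends on only finitely many variables, choose $n$ with $p,q,u,v\in\mathcal{P}_n$, which is a UFD. Set $\varphi=\gcd(p,q)\in\mathcal{P}_n$ and write $p=r\varphi$, $q=s\varphi$ with $\gcd(r,s)=1$ in $\mathcal{P}_n$. Since $r,s,\varphi\in\mathcal{P}_n\subset\mathcal{P}_\infty$, this furnishes the required factorization in $\mathcal{P}_\infty$. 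On $\mathbb{T}^\infty$ we obtain $|\varphi|\,|ru|=|\varphi|\,|sv|$, so $|ru|=|sv|$ at every point where $\varphi\neq0$. The zero set of the nonzero polynomial $\varphi$ on $\mathbb{T}^n$ is a proper real-algebraic subvariety, hence nowhere dense, so continuity of $|ru|$ and $|sv|$ extends the equality to all of $\mathbb{T}^n$ and therefore to all of $\mathbb{T}^\infty$. This gives $r\asymp s$, as required.

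I do not expect a serious obstacle: the whole argument is a bookkeeping reduction to the already-established Theorem \ref{thm301}. The only mildly delicate point is the density/continuity step used to cancel $|\varphi|$ from the modulus identity on the distinguished boundary, which is handled by the observation that a nonzero polynomial in finitely many variables has nowhere-dense zero set on the torus.
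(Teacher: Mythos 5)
Your proof is correct and follows the paper's (unwritten) route exactly: the paper states Corollary \ref{thm304} as an immediate specialization of Theorem \ref{thm301} to $\bm{\beta}=\{-1\}_{n\in\mathbb{N}}$, where $H=\mathbb{N}$ and $\mathcal{P}_H=\mathcal{P}_\infty$, which is precisely your reduction. Your verification that condition (2) then collapses to $p\asymp q$ together with $\mathcal{K}=\mathcal{L}$ is sound, though the converse direction is even cheaper than your gcd argument: one may simply take $\varphi=1$, $r=p$, $s=q$, which makes $r\asymp s$ literally the hypothesis $p\asymp q$ and avoids the density/continuity step entirely.
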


Below, we take $\bm{\eta}=\{\eta_{n}\}_{n\in\mathbb{N}}$ to be a sequence in $(-1,\infty)$. In this case, the corresponding $H=\varnothing$. Applying Theorem \ref{thm301} to polynomially generated submodules of $A_{\bm{\eta}}^{2}$, we have the following corollary which shows rigidity of such submodules.

\begin{cor}\label{thm305}
Let $\bm{\eta}=\{\eta_{n}\}_{n\in\mathbb{N}}$ be a sequence in $(-1,\infty)$, and let $M$, $N$ be two submodules of $A_{\bm{\eta}}^{2}$ generated by polynomials. Then $M$ is unitarily equivalent to $N$ only if $M=N$.
\end{cor}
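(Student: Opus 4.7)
The plan is to derive Corollary \ref{thm305} as a direct consequence of Theorem \ref{thm301}. The key observation is that the hypothesis $\eta_{n}\in(-1,\infty)$ for every $n\in\mathbb{N}$ makes the set $H=\{n\in\mathbb{N}:\eta_{n}=-1\}$ empty. Consequently $\mathcal{P}_{H}=\mathcal{P}_{\varnothing}=\mathbb{C}$, so condition (2) of Theorem \ref{thm301} can only be realized by factorizations $p=r\varphi$ and $q=s\varphi$ in which $r,s$ are nonzero complex scalars.

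First I would assume $M$ and $N$ are unitarily equivalent and write the Beurling decompositions $M\cap\mathcal{P}_{\infty}=p\mathcal{K}$ and $N\cap\mathcal{P}_{\infty}=q\mathcal{L}$ supplied by the discussion preceding Theorem \ref{thm301}. Applying the implication (1)$\Rightarrow$(2) of that theorem with $H=\varnothing$, I get $\mathcal{K}=\mathcal{L}$ together with a common $\varphi\in\mathcal{P}_{\infty}$ and scalars $r,s\in\mathbb{C}^{\ast}$ such that $p=r\varphi$ and $q=s\varphi$. Since multiplying an ideal by a nonzero scalar does not change the ideal, $p\mathcal{K}=\varphi\mathcal{K}=\varphi\mathcal{L}=q\mathcal{L}$; hence $M\cap\mathcal{P}_{\infty}=N\cap\mathcal{P}_{\infty}$.

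To conclude, I would use the fact that a polynomially generated submodule equals the closure of its polynomial part. Indeed, if $M=[\mathcal{F}]$ for some family $\mathcal{F}\subset\mathcal{P}_{\infty}$, then the algebraic ideal $\langle\mathcal{F}\rangle$ satisfies $\langle\mathcal{F}\rangle\subset M\cap\mathcal{P}_{\infty}\subset M$, and taking closures forces $M=\overline{M\cap\mathcal{P}_{\infty}}$; the analogous identity holds for $N$. Combined with $M\cap\mathcal{P}_{\infty}=N\cap\mathcal{P}_{\infty}$, this yields $M=N$.

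Since the proof is a direct unpacking of Theorem \ref{thm301} in the case $H=\varnothing$, there is no substantive obstacle internal to this corollary; the entire analytic content has already been absorbed into Theorem \ref{thm301}. The only point worth stating carefully is that the modulus-equivalence $r\asymp s$ degenerates to a trivial scalar identity when $r,s\in\mathbb{C}^{\ast}$, and so the two Beurling forms collapse to the same ideal, after which closure in $A_{\bm{\eta}}^{2}$ finishes the argument.
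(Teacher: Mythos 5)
Your proof is correct and is essentially the argument the paper intends (the paper states Corollary \ref{thm305} as an immediate consequence of Theorem \ref{thm301} with $H=\varnothing$ and gives no separate proof). Your unpacking via (1)$\Rightarrow$(2) together with the identity $M=[M\cap\mathcal{P}_{\infty}]$ is sound; one could equally well invoke (1)$\Rightarrow$(3), where $\tilde{p},\tilde{q}$ become nonzero scalars and $M=\overline{\tilde{p}\mathcal{G}}=\overline{\tilde{q}\mathcal{G}}=N$ at once, but this is the same argument in substance.
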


The remainder of this section will be devoted to the proof of Theorem \ref{thm301}. First, we recall again for a sequence $\bm{\beta}=\{\beta_{n}\}_{n\in\mathbb{N}}$ in $[-1,\infty)$, $H=\{n\in\mathbb{N}: \beta_{n}=-1\}$ and
$B=\{n\in\mathbb{N}: \beta_{n}\neq-1\}$. In what follows, we introduce a proposition. It will play an important role in the proof of Theorem \ref{thm301}.

\begin{prop}\label{thm306}
Let $M$, $N$ be two nonzero submodules of $A_{\bm{\beta}}^2$ generated by some polynomials in $\mathcal{P}_{H}$, and $p$, $q$ denote the greatest common divisors of ideals $M\cap\mathcal{P}_{\infty}$ and $N\cap\mathcal{P}_{\infty}$, respectively. If $M$ is unitarily equivalent to $N$ via a unitary module map $U$, then $U$ can be extended to a unitary module map $\widetilde{U}:[p]\rightarrow[q]$.
\end{prop}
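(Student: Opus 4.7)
The strategy is to realize $U$ as multiplication by a single holomorphic function $\phi$ on the polynomial part of $M$, and then propagate that description to all of $[p]$. First I would invoke the infinite-variable analogue of Theorem \ref{thm201} applied to $q\in\mathcal{P}_{H}$: each element of $[q]$ is uniquely of the form $qg$ with $g$ holomorphic on $\mathbb{D}_{2,\bm{\beta}}^{\infty}$. Since $M\cap\mathcal{P}_{\infty}=p\mathcal{K}$ and $U(p\mathcal{K})\subset N\subset[q]$, for each $k\in\mathcal{K}$ I obtain a unique holomorphic $\psi_{k}$ with $U(pk)=q\psi_{k}$, and the module identity $U(hpk)=hU(pk)$ gives $\psi_{hk}=h\psi_{k}$ for all $h\in\mathcal{P}_{\infty}$.

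The next step is to extract a single multiplier. Comparing $\psi_{k_{1}k_{2}}$ computed in the two orders yields $k_{1}\psi_{k_{2}}=k_{2}\psi_{k_{1}}$, so $\phi:=\psi_{k}/k$ is a meromorphic function on $\mathbb{D}_{2,\bm{\beta}}^{\infty}$ independent of the choice of $k\in\mathcal{K}$, which one checks to be holomorphic using the triviality of the greatest common divisor of $\mathcal{K}$ together with the holomorphy of each $\psi_{k}$. Guided by this, I would set
\[
\widetilde{U}(pf):=qf\phi,\qquad pf\in pA_{\bm{\beta}}^{2},
\]
obtaining a $\mathcal{P}_{\infty}$-module map that agrees with $U$ on $p\mathcal{K}\subset M$. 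Once $\widetilde{U}$ is shown to be isometric on $pA_{\bm{\beta}}^{2}$, continuity extends it to an isometry from $[p]=\overline{pA_{\bm{\beta}}^{2}}$ into $[q]$; running the same construction with $U^{-1}:N\to M$ in place of $U$ produces the inverse, establishing unitarity onto $[q]$.

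The main obstacle is precisely the isometry: upgrading $\|qk\phi\|=\|pk\|$ from $k\in\mathcal{K}$ to arbitrary $f\in A_{\bm{\beta}}^{2}$ with $pf\in A_{\bm{\beta}}^{2}$. Because $\mathcal{K}$ is only a proper polynomial ideal (not dense in $A_{\bm{\beta}}^{2}$), continuity alone does not suffice. My approach would be to unfold $\|qk\phi\|^{2}=\|pk\|^{2}$ via the supremum-of-integrals representation of the $A_{\bm{\beta}}^{2}$ norm on the mixed Hardy/Bergman sets $\Xi_{n}$, rewriting it as
\[
\int_{\Xi_{n}}|k|^{2}\bigl(|p|^{2}-|q|^{2}|\phi|^{2}\bigr)\,d\mu_{n}=0
\]
for every $k$ in the finite-variable section $\mathcal{K}\cap\mathcal{P}_{n}$, with $d\mu_{n}=dA_{\beta_{1}}\cdots dA_{\beta_{n}}$. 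Exploiting that the common zero set of $\mathcal{K}\cap\mathcal{P}_{n}$ on the Hardy-direction torus $\mathbb{T}^{H\cap\{1,\dots,n\}}$ has measure zero (a consequence of trivial gcd, as recorded in the Beurling-form discussion), a density argument upgrades these orthogonality relations to the pointwise identity $|p|^{2}=|q|^{2}|\phi|^{2}$ after integration along the Bergman directions, from which the full isometry $\|qf\phi\|=\|pf\|$ follows for every admissible $f$.
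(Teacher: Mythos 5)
Your overall strategy --- realize $U$ as multiplication by a single holomorphic symbol and extend that multiplication operator to $[p]$ --- is the same as the paper's, but two steps in your outline have genuine gaps. First, you invoke an ``infinite-variable analogue of Theorem \ref{thm201}'' and then run a gcd/removable-singularity argument on the ideal $\mathcal{K}$ itself. Neither is available in infinitely many variables: Theorem \ref{thm201} is proved only for $A_{\bm{\beta},n}^2$, the ring $\mathcal{P}_\infty$ is not Noetherian (so $\mathcal{K}$ need not be finitely generated and the quotient $\psi_{k}/k$ cannot be controlled by finitely many $k$'s), and the codimension-two removable singularity theorem is a finite-dimensional tool. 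The paper's Proposition \ref{thm306} gets around all of this by first localizing: Lemma \ref{thm307} shows that $U$ maps $M\cap A_{\bm{\beta},S(k)}^{2}$ onto $N\cap A_{\bm{\beta},S(k)}^{2}$ for $k$ large, after which every division and singularity argument takes place over $\mathbb{D}^{n}$. Your proof needs this localization step; without it the construction of $\phi$ is not justified.

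Second, the isometry argument is the crux and your sketch does not close it. Knowing $\int_{\Xi_n}|k|^{2}\bigl(|p|^{2}-|q|^{2}|\phi|^{2}\bigr)\,d\mu_{n}=0$ for all $k$ in an ideal does not force the (signed) density $|p|^{2}-|q|^{2}|\phi|^{2}$ to vanish: one must polarize ($\|p(k_{1}+k_{2})\|=\|q\phi(k_{1}+k_{2})\|$, etc.) to obtain $\int k_{1}\overline{k_{2}}\,w\,d\mu_{n}=0$ for all $k_{1},k_{2}$, fix one $k_{0}$ so that $k_{1}k_{0},k_{2}k_{0}$ range over the ideal with $k_{1},k_{2}$ arbitrary polynomials, and then apply Stone--Weierstrass and the Riesz representation theorem to the finite measure $|k_{0}|^{2}w\,d\mu_{n}$; one must also replace the ``supremum of dilated integrals'' by genuine boundary values on the Hardy directions before any pointwise statement makes sense. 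This is exactly the content of the lemma of Agrawal--Clark--Douglas that the paper cites, which hands over a unimodular symbol $\eta$ on $\mathbb{T}^{n}$ for free. Finally, note that you never use the hypothesis that $M$ and $N$ are generated by polynomials in $\mathcal{P}_{H}$; the paper uses it essentially, via the decomposition $[p]=[p]_{H}\otimes A_{\bm{\beta},B}^{2}$ and the coefficient operators $C_{\alpha}$, to define $\widetilde{U}=M_{\eta}\otimes I$ and make its isometry and surjectivity immediate from $|\eta|=1$. In your formulation you would still need to argue that $\phi$ (equivalently $\eta=q\phi/p$) is independent of the Bergman variables and that $q\phi f\in[q]$ for every admissible $f$, neither of which is addressed.
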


To give the proof of Proposition \ref{thm306}, we need two lemmas.

\begin{lem}\label{thm307}
Suppose $S$ is a nonempty subset of $\mathbb{N}$. Let $M$, $N$ be submodules of $A_{\bm{\beta}}^2$ generated by some polynomials in $\mathcal{P}_{S}$. If $M$ is unitarily equivalent to $N$ via a unitary module map $U$, then there exists a positive integer $K$, such that for each $k\geq K$,
$$U\left(M\cap A_{\bm{\beta},S(k)}^{2}\right)=N\cap A_{\bm{\beta},S(k)}^{2}.$$
\end{lem}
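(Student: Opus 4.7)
My plan proceeds in three stages: a tensor-product reduction to the case $S=\mathbb{N}$, a detailed analysis of $U$ on one polynomial generator, and a propagation step using the module identity. First, I would exploit the canonical isomorphism $A_{\bm{\beta}}^2 \cong A_{\bm{\beta},S}^2 \otimes A_{\bm{\beta},\mathbb{N}\setminus S}^2$. Since $M$ and $N$ are generated by polynomials in $\mathcal{P}_S$, a direct tensor computation shows $M=[\mathcal{F}]_S\otimes A_{\bm{\beta},\mathbb{N}\setminus S}^2$ and $N=[\mathcal{G}]_S\otimes A_{\bm{\beta},\mathbb{N}\setminus S}^2$ for polynomial generating sets $\mathcal{F},\mathcal{G}\subset\mathcal{P}_S$. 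The unitary module map $U$ commutes with $M_{\zeta_n}$ for every $n\notin S$, and I would argue via norm-preservation that this forces the splitting $U=U'\otimes cI$ for some $c\in\mathbb{T}$ and some unitary module map $U':[\mathcal{F}]_S\to[\mathcal{G}]_S$: the weight ratios $\omega_{\alpha+e_n}/\omega_\alpha$ rule out dependence on Bergman variables ($\beta_n>-1$), while the rigidity of unitary multipliers on Hardy modules handles Hardy variables ($\beta_n=-1$). This reduces the lemma to the case $S=\mathbb{N}$ after relabeling.

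Second, assuming $S=\mathbb{N}$, I would pick a nonzero polynomial $p_0\in M\cap\mathcal{P}_{K_1}$---for instance, the Beurling greatest common divisor of $M\cap\mathcal{P}_\infty$, which lies in finitely many variables by the opening discussion of Section~3---set $q_0:=U(p_0)\in N$, and expand $q_0=\sum_{\gamma\in\mathbb{Z}_+^\infty}c_\gamma\zeta^\gamma$. Using the isometric identity $\|\zeta_j^m p_0\|=\|\zeta_j^m q_0\|$ for every $m\geq 0$ and $j>K_1$, together with the weight ratio $\omega_{\gamma+me_j}/\omega_\gamma=\frac{(\gamma_j+m)!\,\Gamma(\gamma_j+\beta_j+2)}{\gamma_j!\,\Gamma(\gamma_j+m+\beta_j+2)}$, an application of Lemma~\ref{thm204} in each Bergman direction forces $c_\gamma=0$ whenever $\gamma_j>0$ for $j\in B$ with $j>K_1$. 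For Hardy directions the weight-ratio argument is silent, so I would substitute the fact that $M_{\zeta_j}$ is a pure isometry on $A_{\bm{\beta}}^2$ whose Wold/wandering decomposition $U$ must intertwine between $M$ and $N$; combined with the polynomial generation of $N$, this should localize $q_0$ to finitely many variables as well. The output is $q_0\in A_{\bm{\beta},K}^2$ for some $K\geq K_1$.

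Third, the module identity $U(fp_0)=fq_0$ together with the multiplicativity of $E_{S(k)}$ propagates the result: for any $f\in\mathcal{P}_\infty$ with $fp_0\in A_{\bm{\beta},k}^2$ and $k\geq K$, one has $E_{S(k)}(fq_0)=E_{S(k)}(f)\cdot q_0=fq_0$, so $U(fp_0)\in A_{\bm{\beta},k}^2$. Iterating this over a generating family of $M\cap\mathcal{P}_\infty$ and enlarging $K$ to accommodate each generator yields $U(M\cap A_{\bm{\beta},k}^2)\subset N\cap A_{\bm{\beta},k}^2$; the reverse inclusion follows symmetrically by applying the same analysis to $U^{-1}:N\to M$. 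The main obstacles will be the Hardy-direction analysis in the second step (where the weight-ratio trick is unavailable and one must invoke isometry-intertwining or multiplier-theoretic tools), and the subtle task of choosing a single $K$ uniform over a potentially infinite generating family, since $\mathcal{P}_\infty$ is non-Noetherian; overcoming the latter should use the Beurling-form structure of $M\cap\mathcal{P}_\infty$ and $N\cap\mathcal{P}_\infty$ crucially.
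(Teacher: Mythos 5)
Your proposal does not close the essential gap, and it is worth being precise about where. The heart of the lemma is exactly the Hardy directions, and your step 2 concedes that the weight-ratio/Lemma \ref{thm204} device is silent there (correctly so: multiplication by $\zeta_j$ with $\beta_j=-1$ is isometric on all of $A_{\bm{\beta}}^2$, so $\|\zeta_j^m F\|=\|\zeta_j^m UF\|$ carries no information), and then substitutes a gesture toward Wold decompositions that is not an argument --- a unitary module map between Hardy-type submodules can perfectly well be multiplication by an inner function involving new variables, so purity or intertwining alone cannot localize $Up_0$; what prevents this here is that $U$ is \emph{onto} a module generated by polynomials in $\mathcal{P}_S$, and your outline never exploits that. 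The paper's actual mechanism is different and elementary: for $F\in M\cap A_{\bm{\beta},S(k)}^{2}$ pick any nonzero $r\in N\cap\mathcal{P}_{S(k)}$, use the multiplicativity of $E_{S(k)}$ together with the identity $U^{-1}(rG)=G\cdot U^{-1}r$ to obtain $E_{S(k)}U^{-1}E_{S(k)}U(rF)=rF$, and then the norm squeeze $\|U(rF)\|=\|rF\|\leq\|E_{S(k)}U(rF)\|\leq\|U(rF)\|$ forces $U(rF)=E_{S(k)}U(rF)$, hence $UF=E_{S(k)}UF$. Nothing in your plan produces this (or an equivalent) joint use of the projection and the inverse map, and without it the Hardy case is open.

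There are two further problems. Your step 1 splitting $U=U'\otimes cI$ is itself a nontrivial localization statement of the same nature as the lemma, asserted rather than proved, and it is not needed. Your step 3 propagates only over elements of the form $fp_0$ with $f$ a polynomial and asks to ``enlarge $K$ to accommodate each generator''; but the generating family may be infinite and involve unboundedly many variables of $S$ (for instance $M$ generated by $\{\zeta_n\}_{n\in S}$ with $S$ infinite), so no finite $K$ containing all generators exists, and a general element of $M\cap A_{\bm{\beta},S(k)}^{2}$ is a limit of module combinations, not a polynomial multiple of a single $p_0$. The paper avoids both issues by arguing directly with an arbitrary $F\in M\cap A_{\bm{\beta},S(k)}^{2}$ and a single auxiliary polynomial $r\in N\cap\mathcal{P}_{S(k)}$; the only role of $K$ is to guarantee that $M\cap\mathcal{P}_{S(K)}$ and $N\cap\mathcal{P}_{S(K)}$ are nonzero.
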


\begin{proof}
Take a positive integer $K$ such that both $M\cap\mathcal{P}_{S(K)}$ and $N\cap\mathcal{P}_{S(K)}$ are nonzero. It is easy to see that both $M\cap\mathcal{P}_{S(K)}$ and $N\cap\mathcal{P}_{S(K)}$ are ideals of $\mathcal{P}_{S(K)}$. It will be shown if $k\geq K$ and $F\in M\cap A_{\bm{\beta},S(k)}^{2}$, then $UF\in A_{\bm{\beta},S(k)}^{2}$.

Take a nonzero polynomial $r\in N\cap\mathcal{P}_{S(k)}$. Then
\begin{equation}\label{equ301}
E_{S(k)}U(rF)=E_{S(k)}(r\cdot UF)=r\cdot E_{S(k)}UF\in N.
\end{equation}
By an approximation argument, for each $G\in A_{\bm{\beta}}^{2}$,
\begin{equation}\label{equ3021}
U^{-1}(rG)=G\cdot U^{-1}r.
\end{equation}
Then $U^{-1}r$ is a multiplier of $A_{\bm{\beta}}^{2}$. Taking $G=E_{S(k)}UF$ in (\ref{equ3021}) we then obtain
\begin{equation}\label{equ302}
U^{-1}(r\cdot E_{S(k)}UF)=E_{S(k)}UF\cdot U^{-1}r.
\end{equation}
Taking $G=UF$ in (\ref{equ3021}) we then obtain
\begin{equation}\label{equ303}
UF\cdot U^{-1}r=U^{-1}(r\cdot UF)=U^{-1}(U(rF))=rF.
\end{equation}
Therefore
\begin{equation}\label{equ304}
U^{-1}E_{S(k)}U(rF)=U^{-1}(r\cdot E_{S(k)}UF)=E_{S(k)}UF\cdot U^{-1}r,
\end{equation}
where the first equality follows from (\ref{equ301}) and the second follows from (\ref{equ302}). On the other hand, since $E_{S(k)}$ is multiplicative and idempotent on $A_{\bm{\beta}}^2$,
$$E_{S(k)}(E_{S(k)}UF\cdot U^{-1}r)=E_{S(k)}^{2}UF\cdot E_{S(k)}U^{-1}r=E_{S(k)}(UF\cdot U^{-1}r).$$
Then it follows from (\ref{equ303}) that
$$E_{S(k)}(E_{S(k)}UF\cdot U^{-1}r)=E_{S(k)}(UF\cdot U^{-1}r)=E_{S(k)}(rF)=rF.$$
Combining (\ref{equ304}) with this equality gives
$$E_{S(k)}U^{-1}E_{S(k)}U(rF)=E_{S(k)}(E_{S(k)}UF\cdot U^{-1}r)=rF,$$
and thus
$$\|U(rF)\|=\|rF\|=\|E_{S(k)}U^{-1}E_{S(k)}U(rF)\|\leq\|E_{S(k)}U(rF)\|\leq\|U(rF)\|.$$
This implies
$$\|U(rF)\|=\|E_{S(k)}U(rF)\|,$$
forcing
$$U(rF)=E_{S(k)}U(rF),$$
and hence
$$r\cdot UF=r\cdot E_{S(k)}UF.$$
Then
$$UF=E_{S(k)}UF\in A_{\bm{\beta},S(k)}^{2},$$
which implies
$$U\left(M\cap A_{\bm{\beta},S(k)}^{2}\right)\subset N\cap A_{\bm{\beta},S(k)}^{2}.$$
Applying a similar argument yields the opposite inclusion, which completes the proof.
\end{proof}

\begin{lem}\label{thm308}
If $\mathcal{E}$ is a nonzero ideal of $\mathcal{P}_{H}$, then $[\mathcal{E}]\cap\mathcal{P}_{\infty}$ and $[\mathcal{E}]\cap\mathcal{P}_{H}$ have the same greatest common divisor, except a constant factor.
\end{lem}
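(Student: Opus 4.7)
The plan is to establish divisibility in both directions. Write $p_{1}$ for the greatest common divisor of the ideal $[\mathcal{E}]\cap\mathcal{P}_{H}$ of $\mathcal{P}_{H}$ and $p_{2}$ for that of the ideal $[\mathcal{E}]\cap\mathcal{P}_{\infty}$ of $\mathcal{P}_{\infty}$. Both are well defined and nonzero because $\mathcal{E}$ itself sits inside each intersection, and the preceding discussion guarantees the existence of the greatest common divisor in each case. The easy inclusion $[\mathcal{E}]\cap\mathcal{P}_{H}\subset[\mathcal{E}]\cap\mathcal{P}_{\infty}$ makes $p_{2}$ a common divisor of the smaller set, so $p_{2}\mid p_{1}$ is immediate.

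For the reverse direction $p_{1}\mid p_{2}$, I would exploit the canonical decomposition $A_{\bm{\beta}}^{2}=H^{2}(\mathbb{D}_{2}^{H})\otimes A_{\bm{\beta},B}^{2}$ together with the bounded coefficient operators $C_{\alpha}\colon A_{\bm{\beta}}^{2}\to H^{2}(\mathbb{D}_{2}^{H})$ introduced in Section 2, corresponding to the expansion $F=\sum_{\alpha\in\mathbb{Z}_{+}^{B}}F_{\alpha}\zeta^{\alpha}$. The key claim is that $C_{\alpha}([\mathcal{E}])\subset[\mathcal{E}]_{H}$ for every $\alpha\in\mathbb{Z}_{+}^{B}$. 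To verify it, take an elementary generator $fh$ with $f\in\mathcal{P}_{\infty}$ and $h\in\mathcal{E}$, and expand $f=\sum_{\alpha}f_{\alpha}(\zeta_{H})\,\zeta_{B}^{\alpha}$ with $f_{\alpha}\in\mathcal{P}_{H}$. Then $fh=\sum_{\alpha}(f_{\alpha}h)\,\zeta_{B}^{\alpha}$, and since $\mathcal{E}$ is an ideal of $\mathcal{P}_{H}$ we have $f_{\alpha}h\in\mathcal{E}$; hence $C_{\alpha}(fh)=f_{\alpha}h\in\mathcal{E}\subset[\mathcal{E}]_{H}$. Linearity extends this to $\mathcal{P}_{\infty}\cdot\mathcal{E}$, and boundedness of $C_{\alpha}$ together with the closedness of $[\mathcal{E}]_{H}$ extends it to the closure $[\mathcal{E}]$.

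Granted the claim, take an arbitrary $F\in[\mathcal{E}]\cap\mathcal{P}_{\infty}$. Because $F$ is a polynomial, its expansion $F=\sum_{\alpha\in\mathbb{Z}_{+}^{B}}F_{\alpha}\zeta_{B}^{\alpha}$ is a finite sum with each coefficient $F_{\alpha}=C_{\alpha}F$ lying in $\mathcal{P}_{H}$. The claim places $F_{\alpha}$ in $[\mathcal{E}]_{H}\cap\mathcal{P}_{H}$, so $p_{1}$ divides every $F_{\alpha}$ and therefore divides $F$. Ranging over $F\in[\mathcal{E}]\cap\mathcal{P}_{\infty}$ yields $p_{1}\mid p_{2}$. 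Combined with the opposite divisibility this proves $p_{1}$ and $p_{2}$ agree up to a constant factor, which is exactly the statement of the lemma.

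I do not anticipate a real obstacle here: the only delicate point is the passage from the algebraic generating set $\mathcal{P}_{\infty}\cdot\mathcal{E}$ to its norm closure $[\mathcal{E}]$, and this is handled automatically by the boundedness of the coefficient operators $C_{\alpha}$ already recorded in the preliminaries. Everything else is purely formal manipulation with the tensor decomposition and the ideal property of $\mathcal{E}$.
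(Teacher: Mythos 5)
Your proof is correct and follows essentially the same route as the paper's: one divisibility from the inclusion $[\mathcal{E}]\cap\mathcal{P}_{H}\subset[\mathcal{E}]\cap\mathcal{P}_{\infty}$, and the other from the tensor decomposition of $[\mathcal{E}]$ over $H$ and $B$, which forces the $\zeta_{B}$-coefficients of any polynomial in $[\mathcal{E}]$ to lie in $[\mathcal{E}]_{H}\cap\mathcal{P}_{H}$. Your explicit verification via the operators $C_{\alpha}$ simply fills in the step the paper dismisses as ``easy to verify,'' namely $[\mathcal{E}]=[\mathcal{E}]_{H}\otimes A_{\bm{\beta},B}^{2}$ (of which only the inclusion $\subset$ is needed).
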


\begin{proof}
Let $\varphi_{1}$, $\varphi_{2}$ be the greatest common divisors of $[\mathcal{E}]\cap\mathcal{P}_{\infty}$ and $[\mathcal{E}]\cap\mathcal{P}_{H}$, respectively. It is clear that $\varphi_{1}|\varphi_{2}$.

On the other hand, it is easy to verify that $[\mathcal{E}]=[\mathcal{E}]_{H}\otimes A_{\bm{\beta},B}^{2}$. Then for each polynomial $r\in[\mathcal{E}]\cap\mathcal{P}_{\infty}$, there exists a positive integer $N$ and polynomials $\{r_{k}\}_{k=1}^{N}\subset[\mathcal{E}]\cap\mathcal{P}_{H}$, $\{p_{k}\}_{k=1}^{N}\subset\mathcal{P}_{B}$, such that $r=\sum_{k=1}^{N}r_{k}p_{k}$. This implies $\varphi_{2}|r$, and thus $\varphi_{2}|\varphi_{1}$. Therefore $\varphi_1=c\varphi_2$ for some constant $c$.
\end{proof}

Before proving Proposition \ref{thm306}, we recall some notions related to the Hardy space $H^{2}(\mathbb{D}_{2}^{H})$. Let
$$\mathbb{T}^{H}=\{\zeta\in\mathbb{T}^\infty:\;\text{for each}\;n\notin H,\;\zeta_{n}=1\}.$$
As a subgroup of $\mathbb{T}^\infty$, it is compact. The Haar measure on $\mathbb{T}^{H}$ will be denoted by $\sigma_{H}$. The Hardy space over $\mathbb{T}^{H}$, denoted by $H^{2}(\mathbb{T}^{H})$, is the closure of $\mathcal{P}_{H}$ in the space $L^{2}(\mathbb{T}^{H},\sigma_{H})$. With each function $F\in H^{2}(\mathbb{D}_{2}^{H})$ is associated its boundary-value function $F^{*}\in H^{2}(\mathbb{T}^{H})$, and
\begin{equation}\label{equ315}
\|F\|^2=\int_{\mathbb{T}^{H}}|F^{*}|^{2}d\sigma_{H}.
\end{equation}
Moreover, $H^{2}(\mathbb{D}_{2}^{H})$ is isometrically isomorphic to $H^{2}(\mathbb{T}^{H})$, and the isomorphism $H^{2}(\mathbb{T}^{H})=H^{2}(\mathbb{D}_{2}^{H})$ is realized by taking Poisson integrals (see \cite{Ru} for the finite-variable case; see \cite{AOS, CG1} for the infinite-variable case). Hence we may identify a function in $H^{2}(\mathbb{D}_{2}^{H})$ with its boundary-value function in $H^{2}(\mathbb{T}^{H})$ if we need.

To prove Proposition \ref{thm306}, we also need the following reasoning. Suppose $S$ is a subset of $\mathbb{N}$. For $p\in\mathcal{P}_{S}$, since $p\cdot\mathcal{P}_{\infty}$ is dense in $[p]$, it follows that $p\cdot\mathcal{P}_{S}=E_{S}(p\cdot\mathcal{P}_{\infty})$ is dense in $E_{S}[p]$, and thus $[p]_S=E_{S}[p]$. On the other hand,
$$[p]_{S}\subset[p]\cap A_{\bm{\beta},S}^{2}\subset E_{S}[p].$$
Thus we have proved
\begin{equation}\label{equ305}
[p]_{S}=[p]\cap A_{\bm{\beta},S}^{2}=E_{S}[p].
\end{equation}

We also recall that every function $F\in A_{\bm{\beta}}^2$ can be expanded uniquely as follows:
\begin{equation}\label{equ335}
F=\sum_{\alpha\in\mathbb{Z}_{+}^{B}}C_{\alpha}F\cdot\zeta^{\alpha},
\end{equation}
where $\{C_{\alpha}F: \alpha\in\mathbb{Z}_{+}^{B}\}\subset H^{2}(\mathbb{D}_{2}^{H})$.

We now prove Proposition \ref{thm306}.

\vskip2mm

\noindent\textbf{Proof of Proposition \ref{thm306}.} If $H=\varnothing$, then $M=N=A_{\bm{\beta}}^2$, and the conclusion is trivial. Below we suppose that $H$ is nonempty. It is clear that both $M\cap\mathcal{P}_{H}$ and $N\cap\mathcal{P}_{H}$ are ideals of $\mathcal{P}_{H}$. By Lemma \ref{thm308}, $p,q\in\mathcal{P}_{H}$ are the greatest common divisors of $M\cap\mathcal{P}_{H}$ and $N\cap\mathcal{P}_{H}$, respectively. So we assume
$M\cap\mathcal{P}_{H}=p\mathcal{K}$ and $N\cap\mathcal{P}_{H}=q\mathcal{L}$ are the Beurling forms of $M\cap\mathcal{P}_{H}$ and
$N\cap\mathcal{P}_{H}$, respectively.

Taking $S=H$ in Lemma \ref{thm307} yields that for $n$ sufficiently large,
\begin{equation}\label{equ306}
U\left(M\cap H^{2}(\mathbb{D}^{H(n)})\right)=N\cap H^{2}(\mathbb{D}^{H(n)}).
\end{equation}
Moreover, we suppose that for such $n$, $\mathcal{K}\cap\mathcal{P}_{H(n)}$ only have trivial common divisors, and $p,q$ are the greatest common divisors of $M\cap\mathcal{P}_{H(n)}$ and $N\cap\mathcal{P}_{H(n)}$, respectively.

Without loss of generality we may assume $H(n)=\{1,2,\cdots,n\}$. It follows from \cite[Lemma 1]{ACD} that there is a function $\eta$ on $\mathbb{T}^{n}$ with modulus $1$, such that for $f\in M\cap H^{2}(\mathbb{D}^{n})$,
\begin{equation}\label{equ307}
Uf=\eta f
\end{equation}
on $\mathbb{T}^{n}$. Then \cite[Lemma 3.5]{Guo1} immediately yields $\eta p\in H^\infty(\mathbb{T}^{n})$. Put $\tilde{p}=\eta p$ and $\tilde{q}=\bar{\eta}q$. It will be shown that $\tilde{p}\in[q]_n$ and $\tilde{q}\in[p]_n$.

For $u\in\mathcal{K}\cap\mathcal{P}_{n}$, we have $pu\in M\cap H^{2}(\mathbb{D}^{n})$. Then by (\ref{equ307}),
$$U(pu)=\eta pu=\tilde{p}u.$$
Combining this with (\ref{equ305}) and (\ref{equ306}) shows that
$$\tilde{p}u\in N\cap H^{2}(\mathbb{D}^{n})\subset [q]\cap H^{2}(\mathbb{D}^{n})=[q]_{n}.$$
It follows from Theorem \ref{thm201} that for each $u\in\mathcal{K}\cap\mathcal{P}_{n}$, there corresponds a holomorphic function $h_u$ in $\mathbb{D}^{n}$ such that $\tilde{p}u=qh_u$. It is known that the polynomial ring $\mathcal{P}_{n}$ is Noetherian, and hence $\mathcal{K}\cap\mathcal{P}_{n}$, as an ideal of $\mathcal{P}_{n}$, is finitely generated \cite{AM}. Suppose that $\mathcal{K}\cap\mathcal{P}_{n}$ is generated by $u_1,\cdots,u_m$,
and put
$$Z(\mathcal{K}\cap\mathcal{P}_{n})=\{\zeta\in\mathbb{D}^{n}: u(\zeta)=0\   \mathrm{for}\ \mathrm{all}\ u\in\mathcal{K}\cap\mathcal{P}_{n}\},$$
the zero set of $\mathcal{K}\cap\mathcal{P}_{n}$ in $\mathbb{D}^{n}$. Noticing that
$$\frac{\tilde{p}}{q}=\frac{h_{u_1}}{u_1}=\cdots=\frac{h_{u_m}}{u_m},$$
we see $\frac{\tilde{p}}{q}$ is holomorphic in $\mathbb{D}^{n}\setminus Z(\mathcal{K}\cap\mathcal{P}_{n})$. Since the greatest common divisor of $\mathcal{K}\cap\mathcal{P}_{n}$ is $1$, the set $Z(\mathcal{K}\cap\mathcal{P}_{n})$ has codimension $\geq2$ as an analytic subset of $\mathbb{D}^{n}$ (\cite[Corollary 3.1.12]{CG}), and thus $\frac{\tilde{p}}{q}$ can be extended to a holomorphic function $g$ in $\mathbb{D}^{n}$ by the second removable singularity theorem (\cite[Theorem 7.7]{KK}). Now Theorem \ref{thm201} gives
$\tilde{p}=qg\in[q]_{n}$. Similarly, $\tilde{q}\in[p]_n$.

By (\ref{equ315}) and the conclusion above, it is easily seen that $\eta[p]_H\subset[q]_H$, and for each $h\in[p]_H$, $\|\eta h\|=\|h\|$. Combining the decomposition $[p]=[p]_{H}\otimes A_{\bm{\beta},B}^{2}$ with (\ref{equ335}) gives that $C_{\alpha}F\in[p]_H$ for every $F\in[p]$ and $\alpha\in\mathbb{Z}_{+}^{B}$. Define a linear operator $\widetilde{U}: [p]\rightarrow[q]$ by putting
$$\widetilde{U}F=\sum_{\alpha\in\mathbb{Z}_{+}^{B}}\eta C_{\alpha}F\cdot\zeta^{\alpha},\quad F\in[p].$$
Then for each $F\in[p]$,
$$\|\widetilde{U}F\|^{2}=\sum_{\alpha\in\mathbb{Z}_{+}^{B}}\|\eta C_{\alpha}F\|^{2}\|\zeta^{\alpha}\|^{2}=\sum_{\alpha\in\mathbb{Z}_{+}^{B}}\|C_{\alpha}F\|^{2}\|\zeta^{\alpha}\|^{2}=\|F\|^{2},$$
which implies $\widetilde{U}$ is an isometry. Similarly, we define a linear operator $\widetilde{V}: [q]\rightarrow[p]$ by putting
$$\widetilde{V}G=\sum_{\alpha\in\mathbb{Z}_{+}^{B}}\bar{\eta}C_{\alpha}G\cdot\zeta^{\alpha},\quad G\in[q].$$
A simple verification shows that $\widetilde{U}\widetilde{V}$ is the identity operator on $[q]$. Therefore $\widetilde{U}$ is onto, and hence unitary.

The only thing remaining is to show that $\widetilde{U}$ is an extension of $U$. Indeed, it is clear that $\widetilde{U}|_{p\mathcal{K}}=U|_{p\mathcal{K}}$. Since $M$ is generated by some polynomials in $\mathcal{P}_{H}$, it follows that $M=[M\cap\mathcal{P}_{H}]=[p\mathcal{K}]$, and thus $\widetilde{U}|_{M}=U$. The proof is complete. $\hfill\square$

\vskip2mm

To prove Theorem \ref{thm301}, we also need the following lemma.

\begin{lem}\label{thm309}
Let $M$, $N$ be two submodules of $A_{\bm{\beta}}^2$, and $M$ is unitarily equivalent to $N$ via a unitary module map $U$. If $F\in M$ and $\mathbb{N}_F\cap B$ is a finite set, then for each $\alpha\in\mathbb{Z}_{+}^{B}$, $\|C_{\alpha}F\|=\|C_{\alpha}UF\|$.
\end{lem}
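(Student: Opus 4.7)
The plan is to use that $U$ is a module map together with an iterated application of Lemma \ref{thm204} to extract one Bergman index at a time. Since $U$ intertwines polynomial multiplication, $\|\zeta^\gamma F\|=\|\zeta^\gamma UF\|$ for every $\gamma\in\mathbb{Z}_+^B$. Using the tensor decomposition $A_{\bm{\beta}}^2=H^2(\mathbb{D}_2^H)\otimes A_{\bm{\beta},B}^2$ and the expansion $F=\sum_\alpha C_\alpha F\cdot\zeta^\alpha$, the monomials $\{\zeta^{\alpha+\gamma}\}_{\alpha\in\mathbb{Z}_+^B}$ are mutually orthogonal, so
\[
\sum_{\alpha\in\mathbb{Z}_+^B} b_\alpha\,\|\zeta^{\alpha+\gamma}\|^2=0\qquad(\gamma\in\mathbb{Z}_+^B),
\]
where $b_\alpha:=\|C_\alpha UF\|^2-\|C_\alpha F\|^2$. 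The series converges absolutely since $\sum_\alpha\|C_\alpha F\|^2\|\zeta^{\alpha+\gamma}\|^2=\|\zeta^\gamma F\|^2<\infty$, and similarly for $UF$.

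I would next peel off the coordinates in $B$ one at a time. Fix $n_0\in B$ and write $\alpha=\alpha^{*}+ke_{n_0}$, $\gamma=\gamma'+je_{n_0}$ with $\alpha^{*},\gamma'\in\mathbb{Z}_+^{B\setminus\{n_0\}}$; multiplicativity of the monomial norms rewrites the displayed identity as
\[
\sum_{k\ge 0}\widetilde b_k(\gamma')\,\|\zeta_{n_0}^{k+j}\|^2=0\qquad(j\ge 0),
\]
with $\widetilde b_k(\gamma')=\sum_{\alpha^{*}} b_{\alpha^{*}+ke_{n_0}}\,\|\zeta^{\alpha^{*}+\gamma'}\|^2$. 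The estimate $\sum_k|\widetilde b_k(\gamma')|\,\|\zeta_{n_0}^k\|^2\le\|\zeta^{\gamma'}UF\|^2+\|\zeta^{\gamma'}F\|^2<\infty$ together with $\beta_{n_0}>-1$ lets Lemma \ref{thm204} apply, forcing $\widetilde b_k(\gamma')=0$ for every $k$ and $\gamma'$. A finite induction on $|T|$ then produces, for every finite $T\subset B$,
\[
\sum_{\alpha\in\mathbb{Z}_+^{B\setminus T}} b_{\alpha+\kappa}\,\|\zeta^{\alpha+\gamma}\|^2=0
\qquad(\kappa\in\mathbb{Z}_+^T,\ \gamma\in\mathbb{Z}_+^{B\setminus T}).
\]

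To isolate a chosen $\alpha_0\in\mathbb{Z}_+^B$, pick any finite $S\subset B$ with $S\supset B_F\cup\mathrm{supp}(\alpha_0)$ (possible since $B_F:=\mathbb{N}_F\cap B$ is finite by hypothesis) and apply the preceding identity with $T=S$, $\kappa=\alpha_0$, $\gamma=0$. For any nonzero $\alpha\in\mathbb{Z}_+^{B\setminus S}$, the support of $\alpha+\alpha_0$ meets $B\setminus B_F$, and since $F$ is independent of the variables $\{\zeta_n\}_{n\in B\setminus B_F}$, this forces $C_{\alpha+\alpha_0}F=0$. Hence $b_{\alpha+\alpha_0}=\|C_{\alpha+\alpha_0}UF\|^2\ge 0$ and the identity reduces to
\[
\|C_{\alpha_0}F\|^2=\sum_{\alpha\in\mathbb{Z}_+^{B\setminus S}}\|C_{\alpha+\alpha_0}UF\|^2\,\|\zeta^\alpha\|^2.
\]
Since this holds for every admissible $S$, subtracting the analogous identity for $S\cup\{m\}$ from the one for $S$ (for any $m\in B\setminus S$) yields $\sum_{\alpha\in\mathbb{Z}_+^{B\setminus S},\,\alpha_m\ge 1}\|C_{\alpha+\alpha_0}UF\|^2\|\zeta^\alpha\|^2=0$; nonnegativity of each summand together with varying $m$ over $B\setminus S$ collapses the right-hand side to its single $\alpha=0$ term, giving $\|C_{\alpha_0}F\|^2=\|C_{\alpha_0}UF\|^2$.

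The main obstacle I anticipate is bookkeeping: tracking absolute convergence through the iterated invocations of Lemma \ref{thm204} when $B$ is infinite, and, more importantly, handling the fact that $UF$ is not a priori known to have finite Bergman support. The monotonicity-in-$S$ step at the end is precisely what bypasses any need to control the Bergman support of $UF$ in advance.
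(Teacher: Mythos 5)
Your argument is correct, but it is organized quite differently from the paper's. The paper first proves a separate statement (Lemma \ref{thm310}) showing $\mathbb{N}_{UF}\cap B=\mathbb{N}_F\cap B$; since this set $X$ is then known to be finite, all the nonzero coefficients $C_\alpha F$ and $C_\alpha UF$ live on $\mathbb{Z}_+^X$, the identity $\|\zeta^\gamma F\|=\|\zeta^\gamma UF\|$ becomes a finite iterated sum over $s=|X|$ indices, and Lemma \ref{thm204} is applied $s$ times to finish. You never invoke Lemma \ref{thm310}: you peel off one Bergman coordinate at a time from the full sum over $\mathbb{Z}_+^B$, control absolute convergence by bounding $|b_\alpha|\le\|C_\alpha F\|^2+\|C_\alpha UF\|^2$ against the finite quantities $\|\zeta^\gamma F\|^2$ and $\|\zeta^\gamma UF\|^2$, and handle the a priori unknown Bergman support of $UF$ only at the end, via the positivity and monotonicity-in-$S$ collapse. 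Both routes are sound. Yours is self-contained and its summability estimates are more direct (the paper's proof of Lemma \ref{thm310} needs a Fatou's-lemma argument because its coefficient $b_0$ contains the term $-\|F\|^2$, whereas each of your $b_\alpha$ is a difference of two families that are separately summable against the weights); the paper's route, once Lemma \ref{thm310} is in hand, avoids all infinite-index bookkeeping and the final collapsing step, and Lemma \ref{thm310} is of independent use elsewhere. Note also that your concluding step, which forces $C_{\alpha+\alpha_0}UF=0$ whenever $\alpha\ne 0$ is supported off $\mathbb{N}_F\cap B$, effectively reproves the relevant inclusion of Lemma \ref{thm310} for functions of finite Bergman support, so nothing is lost by bypassing it.
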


We will prove Lemma \ref{thm309} by virtue of Lemma \ref{thm310} below.

\begin{lem}\label{thm310}
Let $M$, $N$ be two submodules of $A_{\bm{\beta}}^2$, and $M$ is unitarily equivalent to $N$ via a unitary module map $U$. Then for each $F\in M$, $\mathbb{N}_F\cap B=\mathbb{N}_{UF}\cap B$.
\end{lem}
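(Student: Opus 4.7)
\textbf{Proof plan for Lemma \ref{thm310}.} The plan is to reduce everything to the one-variable analysis captured by Lemma \ref{thm204}, handling one index $n \in B$ at a time. Fix such an $n$ and use the tensor product factorization $A_{\bm{\beta}}^2 = A_{\beta_n}^2 \otimes A_{\bm{\beta},\mathbb{N}\setminus\{n\}}^2$ to write uniquely
$$F = \sum_{k=0}^\infty \zeta_n^k f_k, \qquad UF = \sum_{k=0}^\infty \zeta_n^k g_k,$$
with $f_k, g_k \in A_{\bm{\beta},\mathbb{N}\setminus\{n\}}^2$. Since $n \notin \mathbb{N}_F$ exactly when $f_k \equiv 0$ for every $k \geq 1$ (and the corresponding statement for $UF$ in terms of $g_k$), the entire claim will follow once I establish $\|f_k\| = \|g_k\|$ for all $k \geq 0$.

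To obtain these norm equalities, exploit that $U$ is a module map: for each $j \geq 0$,
$$\|\zeta_n^j F\| = \|U(\zeta_n^j F)\| = \|\zeta_n^j UF\|.$$
Expanding both sides by orthogonality of $\{\zeta_n^m\}_{m \geq 0}$ in $A_{\beta_n}^2$ and using $\|\zeta_n^m\|_{A_{\beta_n}^2}^2 = m!\,\Gamma(\beta_n+2)/\Gamma(m+\beta_n+2)$, this simplifies to
$$\sum_{k=0}^\infty (\|f_k\|^2 - \|g_k\|^2)\,\frac{(j+k)!}{\Gamma(j+k+\beta_n+2)} = 0, \qquad j \geq 0.$$
Setting $b_k = \|f_k\|^2 - \|g_k\|^2$, the summability hypothesis of Lemma \ref{thm204} reduces to verifying $\sum_k (\|f_k\|^2 + \|g_k\|^2)\, k!/\Gamma(k+\beta_n+2) < \infty$, which is immediate from the convergent expansions $\sum_k \|f_k\|^2 \|\zeta_n^k\|_{A_{\beta_n}^2}^2 = \|F\|^2$ and its analogue for $UF$. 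Crucially $\beta_n > -1$ because $n \in B$, so Lemma \ref{thm204} applies and forces $\|f_k\| = \|g_k\|$ for every $k$, finishing the proof.

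The main obstacle is not computational but conceptual: arranging the decomposition so that the norm identities assemble cleanly into a moment-type system matching the hypotheses of Lemma \ref{thm204}. The role of the hypothesis $n \in B$ is essential --- for $n \in H$ the weights degenerate (every monomial $\zeta_n^m$ has norm $1$ in $A_{-1}^2$), Lemma \ref{thm204} is inapplicable, and indeed the conclusion would be false in general. This is consistent with the statement, which only asserts preservation of $\mathbb{N}_F$ inside $B$.
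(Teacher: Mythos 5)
Your proposal is correct and follows essentially the same route as the paper: both exploit the identities $\|\zeta_n^j F\| = \|U(\zeta_n^j F)\| = \|\zeta_n^j UF\|$ for all $j\geq 0$ to produce a moment system to which Lemma \ref{thm204} applies (valid precisely because $\beta_n>-1$ for $n\in B$), with the summability hypothesis checked exactly as you indicate. The only difference is organizational: you derive the slice-norm equalities $\|f_k\|=\|g_k\|$ for arbitrary $F$ (in effect the one-variable case of Lemma \ref{thm309}), which yields both inclusions simultaneously, whereas the paper fixes $j\in B\setminus\mathbb{N}_F$, shows the corresponding coefficients of $UF$ vanish, and gets the reverse inclusion by running the argument for $U^{-1}$.
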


\begin{proof}
We first prove $\mathbb{N}_{UF}\cap B\subset\mathbb{N}_F\cap B$. If $B\setminus\mathbb{N}_F$ is empty, then $\mathbb{N}_{UF}\cap B\subset B=\mathbb{N}_F\cap B$, and the desired conclusion follows. Below, we suppose $B\setminus\mathbb{N}_F$ is nonempty. Pick $j\in B\setminus\mathbb{N}_F$. It is easily seen that
\begin{equation}\label{equ309}
\|\zeta_{j}^{k}\|^{2}\|F\|^{2}=\frac{k!\Gamma(\beta_{j}+2)}{\Gamma(\beta_{j}+k+2)}\|F\|^{2}.
\end{equation}
On the other hand, suppose $(UF)(\zeta)=\sum_{\alpha\in\mathbb{Z}_{+}^{\infty}}c_{\alpha}\zeta^{\alpha}$. Then for each nonnegative integer $k$,
\begin{equation}\label{equ310}
\begin{aligned}\|\zeta_{j}^{k}(UF)\|^{2}&=\sum\limits_{\alpha\in\mathbb{Z}_{+}^\infty}\left[|c_{\alpha}|^{2}\frac{(\alpha_{j}+k)!\Gamma(\beta_{j}+2)}{\Gamma(\alpha_{j}+\beta_{j}+k+2)}\prod\limits_{l\neq j}\frac{\alpha_{l}!\Gamma(\beta_{l}+2)}{\Gamma(\alpha_{l}+\beta_{l}+2)}\right]
\\&=\sum\limits_{n=0}^{\infty}\left[\sum\limits_{\alpha_{j}=n}\left(|c_{\alpha}|^{2}\prod\limits_{l\neq j}\frac{\alpha_{l}!\Gamma(\beta_{l}+2)}{\Gamma(\alpha_{l}+\beta_{l}+2)}\right)\right]\frac{(n+k)!\Gamma(\beta_{j}+2)}{\Gamma(\beta_{j}+n+k+2)}.\end{aligned}
\end{equation}
Note that $j\notin\mathbb{N}_{F}$. It is clear that
$$\|\zeta_{j}^{k}\|\|F\|=\|\zeta_{j}^{k}F\|=\|U(\zeta_{j}^{k}F)\|=\|\zeta_{j}^{k}(UF)\|.$$
It follows from (\ref{equ309}) and (\ref{equ310}) that for each $k\geq0$,
\begin{equation}\label{equ311}
\sum\limits_{n=0}^{\infty}b_{n}\frac{(n+k)!}{\Gamma(\beta_{j}+n+k+2)}=0,
\end{equation}
where
$$b_{0}=\sum\limits_{\alpha_{j}=0}\left(|c_{\alpha}|^{2}\prod\limits_{l\neq j}\frac{\alpha_{l}!\Gamma(\beta_{l}+2)}{\Gamma(\alpha_{l}+\beta_{l}+2)}\right)-\|F\|^{2},$$
and
$$b_{n}=\sum\limits_{\alpha_{j}=n}\left(|c_{\alpha}|^{2}\prod\limits_{l\neq j}\frac{\alpha_{l}!\Gamma(\beta_{l}+2)}{\Gamma(\alpha_{l}+\beta_{l}+2)}\right),\quad n\geq1.$$
To apply Lemma \ref{thm204}, it will be shown that $\{b_{n}\}_{n\geq0}$ is summable. An direct computation shows that when $n\geq1$, the sequence $\left\{\frac{(n+k)!\Gamma(k+\beta_j+2)}{k!\Gamma(n+k+\beta_j+2)}\right\}_{k\geq0}$ is bounded and ascending. Then
$$\sum\limits_{n=1}^{\infty}b_{n}\leq\varliminf\limits_{k\rightarrow\infty}\sum\limits_{n=1}^{\infty}b_{n}\frac{(n+k)!\Gamma(k+\beta_j+2)}{k!\Gamma(n+k+\beta_j+2)}
=-b_{0}<\infty,$$
where the first inequality follows from the Fatou's lemma and the equality follows from (\ref{equ311}). Therefore, $\{b_{n}\}_{n\geq0}$ is summable. By Lemma \ref{thm204}, $b_{n}=0\ (n\geq0)$, which means that if $\alpha_{j}\neq0$, then $c_{\alpha}=0$. Thus $j\notin\mathbb{N}_{UF}$, which implies $B\setminus\mathbb{N}_F\subset B\setminus\mathbb{N}_{UF}$, and hence $\mathbb{N}_{UF}\cap B\subset\mathbb{N}_F\cap B$.

Noticing that $U: M\rightarrow N$ is a unitary module map, the opposite inclusion can be obtained by the same argument, and the proof is complete.
\end{proof}

We are ready to prove Lemma \ref{thm309}.

\vskip2mm

\noindent\textbf{Proof of Lemma \ref{thm309}.} It follows from Lemma \ref{thm310} that $\mathbb{N}_{UF}\cap B=\mathbb{N}_F\cap B$ is a finite set. We denote it by $X$.

When $X$ is empty, it follows that $F,UF\in H^{2}(\mathbb{D}_{2}^{H})$, which implies $C_\alpha F=F$ and $C_\alpha UF=UF$. Therefore
$$\|C_\alpha F\|=\|F\|=\|UF\|=\|C_\alpha UF\|,$$
which ensures the desired conclusion.

Now we assume $X$ is nonempty. Set
$$\mathbb{Z}_{+}^{X}=\{\alpha\in\mathbb{Z}_{+}^{\infty}: \alpha_{n}=0\;\text{for all}\;n\notin X\}.$$
By the definition of $X$, if $\alpha\in\mathbb{Z}_{+}^{B}\setminus \mathbb{Z}_{+}^{X}$, then $C_{\alpha}F=C_{\alpha}UF=0$. Therefore, it suffices to treat the case $\alpha\in\mathbb{Z}_{+}^{X}$. Suppose $X=\{j_1,j_2,\cdots,j_s\}$. It is easily seen that for each index $\gamma\in\mathbb{Z}_{+}^{X}$,
\begin{equation}\label{equ312}
\begin{aligned}\|\zeta^{\gamma}F\|^{2}&=\sum_{\alpha\in\mathbb{Z}_{+}^{X}}\|C_\alpha F\|^{2}\|\zeta^{\alpha+\gamma}\|^{2}
\\&=\sum_{\alpha_{j_{1}}=0}^{\infty}\cdots\sum_{\alpha_{j_{s}}=0}^{\infty}\left[\|C_\alpha F\|^{2}
\prod_{l=1}^{s}\frac{(\alpha_{j_{l}}+\gamma_{j_{l}})!\Gamma(\beta_{j_{l}}+2)}{\Gamma(\alpha_{j_{l}}+\gamma_{j_{l}}+\beta_{j_{l}}+2)}\right].\end{aligned}
\end{equation}
Similarly,
\begin{equation}\label{equ313}
\|\zeta^{\gamma}UF\|^{2}=\sum_{\alpha_{j_{1}}=0}^{\infty}\cdots\sum_{\alpha_{j_{s}}=0}^{\infty}\left[\|C_\alpha UF\|^{2}
\prod_{l=1}^{s}\frac{(\alpha_{j_{l}}+\gamma_{j_{l}})!\Gamma(\beta_{j_{l}}+2)}{\Gamma(\alpha_{j_{l}}+\gamma_{j_{l}}+\beta_{j_{l}}+2)}\right].
\end{equation}
Since
$$\|\zeta^{\gamma}F\|=\|U(\zeta^{\gamma}F)\|=\|\zeta^{\gamma}UF\|,$$
it follows from (\ref{equ312}) and (\ref{equ313}) that for every $\gamma_{j_{1}},\cdots,\gamma_{j_{s}}\geq0$,
$$\sum_{\alpha_{j_{1}}=0}^{\infty}\cdots\sum_{\alpha_{j_{s}}=0}^{\infty}\left[\left(\|C_\alpha F\|^{2}-\|C_\alpha UF\|^{2}\right)
\prod_{l=1}^{s}\frac{(\alpha_{j_{l}}+\gamma_{j_{l}})!\Gamma(\beta_{j_{l}}+2)}{\Gamma(\alpha_{j_{l}}+\gamma_{j_{l}}+\beta_{j_{l}}+2)}\right]=0.$$
Reusing Lemma \ref{thm204} $s$ times yields for every $\alpha\in\mathbb{Z}_{+}^{X}$, $\|C_\alpha F\|=\|C_\alpha UF\|$, which gives the desired conclusion. $\hfill\square$

\vskip2mm

The following argument will be used in the proof of Theorem \ref{thm301}.

Given a sequence $\bm{\beta}=\{\beta_{n}\}_{n\in\mathbb{N}}$ in $[-1,\infty)$, the corresponding $H=\{n\in\mathbb{N}: \beta_{n}=-1\}$, $B=\{n\in\mathbb{N}: \beta_{n}\neq-1\}$. For each subset $\mathcal{F}$ of $A_{\bm{\beta}}^2$, set
$$\mathcal{E}_{\mathcal{F}}=\{C_{\alpha}F: \alpha\in\mathbb{Z}_{+}^{B}, F\in\mathcal{F}\}.$$
We claim that when $\mathcal{I}$ is an ideal of $\mathcal{P}_{\infty}$, $\mathcal{E}_{\mathcal{I}}$ is an ideal of $\mathcal{P}_{H}$. Indeed, it is clear that for $\varphi\in\mathcal{P}_{H}$, $p\in\mathcal{I}$ and $\alpha\in\mathbb{Z}_{+}^{B}$,
$$\varphi C_{\alpha}p=C_{\alpha}(\varphi p)\in\mathcal{E}_{\mathcal{I}}.$$
It is remaining to show that $\mathcal{E}_{\mathcal{I}}$ is closed under addition. For $p,q\in\mathcal{I}$ and $\alpha,\beta\in\mathbb{Z}_{+}^{B}$, a direct verification shows that
$$C_{\alpha}p+C_{\beta}q=C_{\alpha+\beta}(\zeta^{\beta}p+\zeta^{\alpha}q)\in\mathcal{E}_{\mathcal{I}}.$$
This gives the desired conclusion.

Now we proceed to present the proof of Theorem \ref{thm301}.

\vskip2mm

\noindent\textbf{Proof of Theorem \ref{thm301}.} $(1)\Rightarrow(2)$: Suppose $M$ is unitarily equivalent to $N$ via a unitary module map $U$. Put $\mathcal{I}=M\cap\mathcal{P}_{\infty}$, $\mathcal{J}=N\cap\mathcal{P}_{\infty}$. The above argument shows that $\mathcal{E}_{\mathcal{I}}$ and $\mathcal{E}_{\mathcal{J}}$ are ideals of $\mathcal{P}_{H}$. As we mentioned previously, $[\mathcal{E}_{\mathcal{I}}]_{H}$ and $[\mathcal{E}_{U\mathcal{I}}]_{H}$ are submodules of $H^2(\mathbb{D}_{2}^{H})$. Define $U_{1}: \mathcal{E}_{\mathcal{I}}\rightarrow\mathcal{E}_{U\mathcal{I}}$ by putting
$$U_{1}(C_{\alpha}w)=C_{\alpha}Uw,\quad \alpha\in\mathbb{Z}_{+}^{B},\ w\in\mathcal{I}.$$
We first show that $U_{1}$ is linear. Indeed, for $\alpha_1,\alpha_2\in\mathbb{Z}_{+}^{B}$ and $w_1,w_2\in\mathcal{I}$,
$$\begin{aligned}U_1(C_{\alpha_1}w_1+C_{\alpha_2}w_2)&=U_1C_{\alpha_1+\alpha_2}(\zeta^{\alpha_2}w_1+\zeta^{\alpha_1}w_2)
\\&=C_{\alpha_1+\alpha_2}(\zeta^{\alpha_2}Uw_1+\zeta^{\alpha_1}Uw_2)
\\&=C_{\alpha_1}Uw_1+C_{\alpha_2}Uw_2
\\&=U_1C_{\alpha_1}w_1+U_1C_{\alpha_2}w_2.\end{aligned}$$
The claim follows. Moreover, it follows from Lemma \ref{thm309} that $U_{1}$ is an isometry. Since $\mathcal{E}_{\mathcal{I}}$ and $\mathcal{E}_{U\mathcal{I}}$ are dense in $[\mathcal{E}_{\mathcal{I}}]_{H}$ and $[\mathcal{E}_{U\mathcal{I}}]_{H}$, respectively, $U_{1}$ can be extended to a unitary module map from $[\mathcal{E}_{\mathcal{I}}]_{H}$ onto $[\mathcal{E}_{U\mathcal{I}}]_{H}$. We also denote it by $U_{1}$.

For each $F\in[\mathcal{E}_{\mathcal{I}}]$, we define
$$U_2F=\sum_{\alpha\in\mathbb{Z}_{+}^{B}}U_1C_\alpha F\cdot\zeta^\alpha.$$
A similar argument as in the proof of Proposition \ref{thm306} yields $U_{2}: [\mathcal{E}_{\mathcal{I}}]\rightarrow[\mathcal{E}_{U\mathcal{I}}]$ is well defined and unitary. Moreover, if $h\in\mathcal{I}$, then
$$U_2h=\sum_{\alpha\in\mathbb{Z}_{+}^{B}}U_1C_\alpha h\cdot\zeta^\alpha=\sum_{\alpha\in\mathbb{Z}_{+}^{B}}C_\alpha Uh\cdot\zeta^\alpha=Uh.$$
Noticing that $\mathcal{I}$ is dense in $M$, the above equality implies that $U_2|_M=U$.

Since both $\mathcal{E}_{\mathcal{J}}$ and $\mathcal{E}_{U\mathcal{I}}$ are dense in $\mathcal{E}_{N}$, we conclude that $[\mathcal{E}_{\mathcal{J}}]=[\mathcal{E}_{U\mathcal{I}}]$. Suppose the greatest common divisors of ideals $[\mathcal{E}_{\mathcal{I}}]\cap\mathcal{P}_{\infty}$ and $[\mathcal{E}_{\mathcal{J}}]\cap\mathcal{P}_{\infty}$ are $r,s\in\mathcal{P}_{H}$,  respectively. By Proposition \ref{thm306}, $U_{2}$ has a unitary extension $U_{3}: [r]\rightarrow[s]$. Clearly, when $H$ is empty, $\mathcal{P}_{H}$ is the complex field $\mathbb{C}$, and hence $r\asymp s$. When $H$ is nonempty, take $S=H$ in Lemma \ref{thm307}, we get for $n$ sufficiently large, $r,s\in\mathcal{P}_{H(n)}$, and $[r]_{H(n)}$ is unitarily equivalent to $[s]_{H(n)}$ as submodules of $H^{2}(\mathbb{D}^{H(n)})$. Then it follows from \cite[Corollary 3.6]{Guo1} that $r\asymp s$.

Note that $p,r$ are the greatest common divisors of $M\cap\mathcal{P}_{\infty}$ and $[\mathcal{E}_{\mathcal{I}}]\cap\mathcal{P}_{\infty}$, respectively, and $M\cap\mathcal{P}_{\infty}=\mathcal{I}\subset[\mathcal{E}_{\mathcal{I}}]\cap\mathcal{P}_{\infty}$. It follows immediately $r|p$, and thus
$$\mathcal{I}=M\cap\mathcal{P}_{\infty}=r\varphi\mathcal{K},$$
where $\varphi\in\mathcal{P}_{\infty}$, and $\mathcal{K}$ is an ideal of $\mathcal{P}_{\infty}$ without nontrivial common divisor. Similarly, there is a polynomial $\psi\in\mathcal{P}_{\infty}$ and an ideal $\mathcal{L}$ of $\mathcal{P}_{\infty}$ without nontrivial common divisor such that
$$\mathcal{J}=N\cap\mathcal{P}_{\infty}=s\psi\mathcal{L}.$$
Therefore $M=[r\varphi\mathcal{K}]$ and $N=[s\psi\mathcal{L}]$, which in turn imply
$$N=U_{3}M=U_{3}[r\varphi\mathcal{K}]=[s\varphi\mathcal{K}],$$
and thus
$$s\varphi\mathcal{K}\subset N\cap\mathcal{P}_{\infty}=\mathcal{J}=s\psi\mathcal{L},$$
forcing $\varphi\mathcal{K}\subset\psi\mathcal{L}$. By a similar argument, we obtain the opposite inclusion, and thus $\varphi\mathcal{K}=\psi\mathcal{L}$. Noticing that $\mathcal{K},\mathcal{L}$ are ideals without nontrivial common divisors, we have $\varphi=\psi$ and $\mathcal{K}=\mathcal{L}$.

$(2)\Rightarrow(3)$: Since $r\asymp s$, there are two polynomials $u,v\in\mathcal{P}_{H}$ without zero in $\mathbb{D}^{\infty}$, such that $|ru|=|sv|$ on $\mathbb{T}^{\infty}$. For each $f\in\mathcal{K}$, there is an integer $k$ such that $p,u,f\in\mathcal{P}_{k}$. By Theorem \ref{thm201},
$$pf=puf\cdot\frac{1}{u}\in[puf]_k\subset[puf]\subset[pu\mathcal{K}],$$
and hence $[p\mathcal{K}]\subset[pu\mathcal{K}]$. Since $u$ is a polynomial and $p=r\varphi$,
$$M=[p\mathcal{K}]=[pu\mathcal{K}]=[ru\varphi\mathcal{K}].$$
Similarly,
$$N=[q\mathcal{K}]=[qv\mathcal{K}]=[sv\varphi\mathcal{K}].$$
Put $\tilde{p}=ru$, $\tilde{q}=sv$, and $\mathcal{G}=\varphi\mathcal{K}$. It follows that $|\tilde{p}|=|\tilde{q}|$ on $\mathbb{T}^\infty$, and $M,N$ are the closures of $\tilde{p}\mathcal{G}$ and $\tilde{q}\mathcal{G}$, respectively.

$(3)\Rightarrow(1)$: Define a bounded linear operator $U$ of $M$ onto $N$ by putting
$$U(\tilde{p}g)=\tilde{q}g,\quad g\in\mathcal{G}.$$
Since $|\tilde{p}|=|\tilde{q}|$ on $\mathbb{T}^{\infty}$, for each $g\in\mathcal{G}$,
$$\|\tilde{p}g\|^{2}=\sum_{\alpha\in\mathbb{Z}_{+}^{B}}\|\tilde{p}C_{\alpha}g\|^{2}\|\zeta^{\alpha}\|^{2}
=\sum_{\alpha\in\mathbb{Z}_{+}^{B}}\|\tilde{q}C_{\alpha}g\|^{2}\|\zeta^{\alpha}\|^{2}=\|\tilde{q}g\|^{2}.$$
Therefore, $U: M\rightarrow N$ is a unitary module map, and thus $M$ and $N$ are unitarily equivalent. $\hfill\square$

\section{Unitary equivalence and unitary orbits of polynomially generated principle submodules}

In this section, we turn to the case of principle submodules of $A_{\bm{\beta}}^2$ generated by a single polynomial.

Given a polynomial $p\in\mathcal{P}_{\infty}$,  we first consider the structure of ideal $[p]\cap\mathcal{P}_{\infty}$. To do this, we need the following lemma, which is a direct corollary of \cite[Theorem 1.3.2]{Ru}. For every polynomial $\varphi$ in $n$ complex variables, we use $Z(\varphi)$ to denote the zero set of $\varphi$, that is
$$Z(\varphi)=\left\{\zeta\in\mathbb{C}^{n}: \varphi(\zeta)=0\right\}.$$

\begin{lem}\label{thm400}
Suppose $\varphi$ and $\psi$ are relatively prime polynomials in $n$ complex variables. If $\frac{\varphi}{\psi}$ is holomorphic in $\mathbb{D}^n$, then $\psi$ has no zero in $\mathbb{D}^n$.
\end{lem}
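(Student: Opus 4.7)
My plan is to argue by contradiction: assume $\psi$ vanishes at some point $\zeta_0 \in \mathbb{D}^n$ and derive a contradiction from the assumption that $\varphi/\psi$ extends holomorphically across $\zeta_0$. The first easy observation is that a holomorphic function cannot have a pole, so boundedness of $\varphi/\psi$ near $\zeta_0$ together with $\psi(\zeta_0)=0$ forces $\varphi(\zeta_0)=0$. Hence $\zeta_0 \in Z(\varphi)\cap Z(\psi)\cap\mathbb{D}^n$, and we need to produce a nearby point of $Z(\psi)\cap\mathbb{D}^n$ lying outside $Z(\varphi)$.

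The main tool is a codimension estimate. Since $\mathcal{P}_n=\mathbb{C}[\zeta_1,\ldots,\zeta_n]$ is a UFD and $\varphi,\psi$ share no nonconstant factor, the ideal $(\varphi,\psi)$ has height at least $2$, so by Krull's principal ideal theorem (or elementary algebraic geometry) the intersection $Z(\varphi)\cap Z(\psi)$ is an affine variety of complex codimension $\geq 2$ in $\mathbb{C}^n$. On the other hand, near $\zeta_0$ the hypersurface $Z(\psi)$ has pure complex dimension $n-1$. Therefore the real $(2n-2)$-dimensional analytic set $Z(\psi)\cap\mathbb{D}^n$ is not contained in the real $(2n-4)$-dimensional set $Z(\varphi)\cap Z(\psi)$, and we can choose $\zeta_1 \in Z(\psi)\cap\mathbb{D}^n$ arbitrarily close to $\zeta_0$ with $\varphi(\zeta_1)\neq 0$.

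At this $\zeta_1$ the quotient $\varphi/\psi$ must blow up: indeed, taking a sequence $\zeta^{(k)}\to\zeta_1$ along which $\psi(\zeta^{(k)})\to 0$ while $\varphi(\zeta^{(k)})\to \varphi(\zeta_1)\neq 0$ yields $|\varphi(\zeta^{(k)})/\psi(\zeta^{(k)})|\to\infty$, contradicting that $\varphi/\psi$ is holomorphic (hence locally bounded) on $\mathbb{D}^n$. This contradiction shows $\psi$ has no zero in $\mathbb{D}^n$.

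The only step requiring outside input is the codimension bound $\operatorname{codim}(Z(\varphi)\cap Z(\psi))\geq 2$ for coprime $\varphi,\psi$; this is a standard fact from commutative algebra, and it is presumably what Rudin's Theorem 1.3.2 supplies in the form best adapted to the polydisk setting. Everything else is elementary complex analysis, so I do not anticipate a substantive obstacle.
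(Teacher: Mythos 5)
Your proof is correct and follows essentially the same route as the paper: argue by contradiction, observe that holomorphy of $\varphi/\psi$ forces $Z(\psi)\cap\mathbb{D}^n\subset Z(\varphi)\cap\mathbb{D}^n$, and contradict this via the fact that coprime polynomials have common zero set of codimension $\geq 2$ near a common zero --- which is precisely the content of Rudin's Theorem 1.3.2 that the paper invokes after translating the assumed zero to the origin. One cosmetic quibble: the codimension bound comes from height-one primes in a UFD being principal (so a shared codimension-one component would yield a common factor), not from Krull's principal ideal theorem, which gives the opposite inequality; your conclusion and the rest of the argument are sound.
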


\begin{proof}
If $n=1$, the conclusion is trivial. Below we suppose $n>1$. Since $\frac{\varphi}{\psi}$ is holomorphic in $\mathbb{D}^n$, this ensures $Z(\psi)\cap\mathbb{D}^n\subset Z(\varphi)\cap\mathbb{D}^n$. To obtain a contradiction, we assume that there is a point $\lambda\in\mathbb{D}^n$ such that $\psi(\lambda)=0$, and thus $\varphi(\lambda)=0$.

Put $\tilde{\varphi}=\varphi(\cdot+\lambda)$, $\tilde{\psi}=\psi(\cdot+\lambda)$, and
$$-\lambda+\mathbb{D}^n=\left\{-\lambda+\zeta: \zeta\in\mathbb{D}^n\right\}.$$
Then $\tilde{\varphi}$ and $\tilde{\psi}$ are relatively prime, and $\tilde{\varphi}(0)=\tilde{\psi}(0)=0$. It follows from \cite[Theorem 1.3.2]{Ru} that $Z(\tilde{\psi})\cap(-\lambda+\mathbb{D}^n)$ is not a subset of  $Z(\tilde{\varphi})\cap(-\lambda+\mathbb{D}^n)$. This implies that $Z(\psi)\cap\mathbb{D}^n$ is not a subset of  $Z(\varphi)\cap\mathbb{D}^n$, which yields a contradiction, and hence Lemma \ref{thm400} follows.
\end{proof}

Observing that when $p\in\mathcal{P}_\infty$ has no zero in $\mathbb{D}^{\infty}$, Theorem \ref{thm201} shows that the submodule $[p]$ generated by $p$ is the whole space. In what follows, we consider the case of $p$ having  zero points in $\mathbb{D}^{\infty}$. Given such a polynomial $p$,  we decompose $p=p_1p_2$ to be such that $p_1$ has no zero in $\mathbb{D}^{\infty}$, and each nontrivial irreducible factor of $p_2$ has zeros in $\mathbb{D}^{\infty}$. Write $p_{*}=p_2$. Clearly, $p_{*}$ is unique up to a constant factor.

Now we can give a clear expression of submodule $[p]\cap\mathcal{P}_{\infty}$ as follows:
\begin{equation}\label{equ401}
[p]\cap\mathcal{P}_\infty=p_{*}\mathcal{P}_\infty.
\end{equation}
To see this, for each $r\in[p]\cap\mathcal{P}_{\infty}$, it follows from Theorem \ref{thm201} that $\frac{r}{p}$ is holomorphic in $\mathbb{D}^{\mathbb{N}_r\cup\mathbb{N}_p}$. Combining this with Lemma \ref{thm400} implies that $r$ can be divided by $p_{*}$ in $\mathcal{P}_{\infty}$, which yields $[p]\cap\mathcal{P}_{\infty}\subset p_{*}\mathcal{P}_\infty$. On the other hand, if $\varphi\in\mathcal{P}_{\infty}$ and $r=p_{*}\varphi$, then $\frac{r}{p}$ is holomorphic in $\mathbb{D}^{\mathbb{N}_r\cup\mathbb{N}_p}$. By Theorem \ref{thm201}, $r\in[p]$. Therefore $p_{*}\mathcal{P}_\infty\subset[p]\cap\mathcal{P}_{\infty}$, and the desired result follows.

Combining (\ref{equ401}) with Theorem \ref{thm301}, we have the following result on polynomially generated principle submodules.

\begin{thm}\label{thm401}
Let $[p]$, $[q]$ be two principle submodules of $A_{\bm{\beta}}^2$ generated by polynomials $p$, $q$, respectively. Then
they are unitarily equivalent if and only if the polynomials $p_{*},q_{*}$ admit factorizations $p_{*}=r\varphi$, $q_{*}=s\varphi$, where $\varphi\in\mathcal{P}_{\infty}$, $r,s\in\mathcal{P}_{H}$ and $r\asymp s$.
\end{thm}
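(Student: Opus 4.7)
The plan is to reduce Theorem \ref{thm401} to a direct application of Theorem \ref{thm301}, using the description of $[p]\cap\mathcal{P}_{\infty}$ provided by (\ref{equ401}) to read off the Beurling forms of the relevant polynomial ideals.

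First I would observe that $[p]$ and $[q]$ are principal submodules and hence polynomially generated, so Theorem \ref{thm301} applies with $M=[p]$ and $N=[q]$. By (\ref{equ401}),
\[
[p]\cap\mathcal{P}_{\infty}=p_{*}\mathcal{P}_{\infty},\qquad [q]\cap\mathcal{P}_{\infty}=q_{*}\mathcal{P}_{\infty}.
\]
I would then verify that these are indeed Beurling decompositions in the sense required by Theorem \ref{thm301}: since $p_{*}\in p_{*}\mathcal{P}_{\infty}$ (take the multiplier $1$) and every element of $p_{*}\mathcal{P}_{\infty}$ is divisible by $p_{*}$, the polynomial $p_{*}$ is the greatest common divisor of this ideal (up to a constant factor), and similarly for $q_{*}$. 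The companion ideals $\mathcal{K}=\mathcal{L}=\mathcal{P}_{\infty}$ contain the constant $1$, so they have only trivial common divisors, which is the nontriviality condition demanded by the Beurling form.

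Applying the equivalence (1)$\Leftrightarrow$(2) of Theorem \ref{thm301}, I would conclude that $[p]$ and $[q]$ are unitarily equivalent if and only if $\mathcal{K}=\mathcal{L}$ (automatically satisfied) and there exist factorizations $p_{*}=r\varphi$, $q_{*}=s\varphi$ with $r,s\in\mathcal{P}_{H}$, $\varphi\in\mathcal{P}_{\infty}$, and $r\asymp s$. This is exactly the statement of Theorem \ref{thm401}.

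The main obstacle is essentially nonexistent: the theorem is a direct corollary of Theorem \ref{thm301} once one has the explicit identity (\ref{equ401}) in hand. The only conceptual point worth emphasizing in the write-up is that, for a principal submodule, the Beurling decomposition collapses in the sense that the ``coefficient'' ideal equals all of $\mathcal{P}_{\infty}$, so condition (2) of Theorem \ref{thm301} reduces to a condition purely on the divisors $p_{*}$ and $q_{*}$, namely the factorization with a common tail $\varphi$ and modulus-equivalent prefixes in $\mathcal{P}_{H}$.
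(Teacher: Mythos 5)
Your proposal is correct and matches the paper's argument exactly: the paper also obtains Theorem \ref{thm401} by combining the identity $[p]\cap\mathcal{P}_{\infty}=p_{*}\mathcal{P}_{\infty}$ from (\ref{equ401}) with Theorem \ref{thm301}, the Beurling form collapsing to greatest common divisor $p_{*}$ with companion ideal $\mathcal{P}_{\infty}$. Your write-up simply makes explicit the routine verification that the paper leaves to the reader.
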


Now we turn to unitary orbits of polynomially generated principle submodules. For each submodule $M$ of $A_{\bm{\beta}}^2$, recall that its unitary orbit, denoted by $\mathrm{orb}_{u}(M)$, consists of all submodules of $A_{\bm{\beta}}^2$ which are unitarily equivalent to $M$. For $p\in\mathcal{P}_{\infty}$, we consider submodules in $\mathrm{orb}_{u}([p])$.

Recall that for a sequence $\bm{\beta}=\{\beta_{n}\}_{n\in\mathbb{N}}$ in $[-1,\infty)$, the corresponding $H=\{n\in\mathbb{N}: \beta_{n}=-1\}$, and $B=\{n\in\mathbb{N}: \beta_{n}\neq-1\}$. Suppose $M\in\mathrm{orb}_{u}([p])$ and $U: [p]\rightarrow M$ is a unitary module map. It follows that $M=U[p]=[Up]$, and hence $M$ can be determined uniquely by $Up$. An analysis similar to that in the proof of \cite[Theorem 3.4.1]{CG} shows that for every $\alpha\in\mathbb{Z}_{+}^{B}$,
\begin{equation}\label{equ402}
|(C_{\alpha}Up)^{*}|=|(C_{\alpha}p)^{*}|
\end{equation}
almost everywhere on $\mathbb{T}^{H}$ with respect to its Haar measure $\sigma_{H}$, where $(C_{\alpha}Up)^{*}$ and $(C_{\alpha}p)^{*}$ are the boundary-value functions of $C_{\alpha}Up$ and $C_{\alpha}p$, respectively. Indeed, for each $\varphi\in\mathcal{P}_{H}$,
$$\|\varphi  C_{\alpha}Up\|=\|C_{\alpha}(\varphi Up)\|=\|C_{\alpha}U(\varphi p)\|=\|C_{\alpha}(\varphi p)\|=\|\varphi C_{\alpha}p\|,$$
where the third equality follows from Lemma \ref{thm309}. Combining this with (\ref{equ315}) implies
$$\int_{\mathbb{T}^{H}}|\varphi|^2(|(C_{\alpha}Up)^{*}|^{2}-|(C_{\alpha}p)^{*}|^{2})d\sigma_{H}=0.$$
Thus, for any $\varphi,\psi\in\mathcal{P}_{H}$,
$$\int_{\mathbb{T}^{H}}\varphi\overline{\psi}(|(C_{\alpha}Up)^{*}|^{2}-|(C_{\alpha}p)^{*}|^{2})d\sigma_{H}=0.$$
By virtue of the Stone-Weierstrass theorem, the regular Borel measure $(|(C_{\alpha}Up)^{*}|^{2}-|(C_{\alpha}p)^{*}|^{2})d\sigma_{H}$ annihilates $C(\mathbb{T}^H)$, the algebra of all continuous functions on $\mathbb{T}^H$. Therefore, the desired conclusion can be obtained by applying the Riesz representation theorem.

In particular, when $p$ is a monomial, Theorem \ref{thm402} below gives a complete characterization of $\mathrm{orb}_{u}([p])$ by determining the form of $Up$. Recall that $F\in H^{2}(\mathbb{D}_{2}^{H})$ is an inner function, if $F$ is bounded on $\mathbb{D}_{2}^{H}$ and $|F^{*}|=1$ almost everywhere on $\mathbb{T}^{H}$ with respect to $\sigma_{H}$, where $F^*$ is the boundary-value function of $F$. It is clear that for each inner function $F\in H^{2}(\mathbb{D}_{2}^{H})$ and $G\in A_{\bm{\beta}}^{2}$, we have
$$\|FG\|^2=\sum\limits_{\alpha\in\mathbb{Z}_{+}^{B}}\|FC_{\alpha}G\|^2\|\zeta^{\alpha}\|^2
=\sum\limits_{\alpha\in\mathbb{Z}_{+}^{B}}\|C_{\alpha}G\|^2\|\zeta^{\alpha}\|^2=\|G\|^2.$$

\begin{thm}\label{thm402}
Suppose $\gamma\in\mathbb{Z}_{+}^{\infty}$. Then
$$\mathrm{orb}_{u}([\zeta^\gamma])=\left\{F[\zeta^{\tilde{\gamma}}]: F\in H^{2}(\mathbb{D}_{2}^{H})\;\text{\emph{is inner}}\right\},$$
where
$$\tilde{\gamma}_{n}=\begin{cases} \gamma_{n} & n\in B, \\
0 & n\notin B. \end{cases}$$
\end{thm}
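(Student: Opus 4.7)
\textbf{Proof plan for Theorem \ref{thm402}.} My strategy is to prove both inclusions by exploiting the factorization $\zeta^\gamma = \zeta^{\gamma_H}\zeta^{\tilde\gamma}$, where $\gamma_H$ denotes the restriction of $\gamma$ to $H$, together with the key identity (\ref{equ402}) derived in the discussion preceding the theorem and the fact (also noted there) that multiplication by an inner function in $H^{2}(\mathbb{D}_{2}^{H})$ is an isometry on $A_{\bm{\beta}}^{2}$ that commutes with the $\mathcal{P}_{\infty}$-action.

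For the inclusion $\supseteq$, I would first show that $[\zeta^{\gamma}]=\zeta^{\gamma_{H}}[\zeta^{\tilde{\gamma}}]$. Because $\gamma$ is finitely supported, $\zeta^{\gamma_{H}}$ is a monomial in $H$-variables, hence inner in $H^{2}(\mathbb{D}_{2}^{H})$; multiplication by it therefore carries the dense subset $\zeta^{\tilde{\gamma}}\mathcal{P}_{\infty}$ of $[\zeta^{\tilde{\gamma}}]$ isometrically onto the dense subset $\zeta^{\gamma}\mathcal{P}_{\infty}$ of $[\zeta^{\gamma}]$, forcing the claimed identity. For an arbitrary inner $F\in H^{2}(\mathbb{D}_{2}^{H})$, multiplication by $F$ is likewise a module isometry, so $F[\zeta^{\tilde{\gamma}}]$ is a closed submodule of $A_{\bm{\beta}}^{2}$ unitarily equivalent to $[\zeta^{\tilde{\gamma}}]$, and hence to $[\zeta^{\gamma}]$.

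For the inclusion $\subseteq$, take $M\in\mathrm{orb}_{u}([\zeta^{\gamma}])$ implemented by a unitary module map $U:[\zeta^{\gamma}]\to M$, and set $G=U\zeta^{\gamma}$, so that $M=[G]$. The expansion $\zeta^{\gamma}=\sum_{\alpha\in\mathbb{Z}_{+}^{B}}C_{\alpha}(\zeta^{\gamma})\zeta^{\alpha}$ has precisely one nonzero term, $C_{\tilde{\gamma}}(\zeta^{\gamma})=\zeta^{\gamma_{H}}$. Applying (\ref{equ402}) to $p=\zeta^{\gamma}$ then gives $|(C_{\alpha}G)^{*}|=0$ a.e.\ on $\mathbb{T}^{H}$ for $\alpha\neq\tilde{\gamma}$, whence $C_{\alpha}G=0$ by (\ref{equ315}), while $|(C_{\tilde{\gamma}}G)^{*}|=|(\zeta^{\gamma_{H}})^{*}|=1$ a.e.\ on $\mathbb{T}^{H}$. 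Setting $F:=C_{\tilde{\gamma}}G$ yields $G=F\zeta^{\tilde{\gamma}}$ with $F\in H^{2}(\mathbb{D}_{2}^{H})$ of unimodular boundary values; an isometry/density argument then shows $M=[F\zeta^{\tilde{\gamma}}]=F[\zeta^{\tilde{\gamma}}]$, exactly as in the forward direction.

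The main obstacle I expect is verifying that $F=C_{\tilde{\gamma}}G$ qualifies as an inner function in the paper's sense, namely that it is bounded on $\mathbb{D}_{2}^{H}$ and not merely $L^{\infty}$ on the boundary. I would handle this by appealing to the isometric isomorphism $H^{2}(\mathbb{D}_{2}^{H})\cong H^{2}(\mathbb{T}^{H})$ via Poisson integration cited in the excerpt (see \cite{AOS, CG1}): since $F^{*}\in L^{\infty}(\mathbb{T}^{H},\sigma_{H})$ with $\|F^{*}\|_{\infty}=1$, its Poisson integral is bounded by $1$ on $\mathbb{D}_{2}^{H}$. Equivalently, one may pass to the finite-variable compressions $E_{H(n)}F$ and invoke the classical fact that an $H^{2}(\mathbb{D}^{n})$ function with unimodular boundary values is inner via subharmonicity of $|E_{H(n)}F|^{2}$, then take $n\to\infty$.
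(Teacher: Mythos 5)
Your proposal is correct and follows essentially the same route as the paper: the forward inclusion via the isometric module map $\zeta^{\gamma}G\mapsto\zeta^{\tilde{\gamma}}FG$ (which you merely factor through $[\zeta^{\tilde{\gamma}}]$ in two steps), and the reverse inclusion by applying (\ref{equ402}) to $p=\zeta^{\gamma}$ to force $C_{\alpha}U\zeta^{\gamma}=0$ for $\alpha\neq\tilde{\gamma}$ and $|(C_{\tilde{\gamma}}U\zeta^{\gamma})^{*}|=1$ a.e., so that $U\zeta^{\gamma}=F\zeta^{\tilde{\gamma}}$ and $M=F[\zeta^{\tilde{\gamma}}]$. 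Your explicit Poisson-integral justification that $F$ is bounded on $\mathbb{D}_{2}^{H}$ (hence inner in the paper's sense) is a point the paper leaves implicit, but it does not change the argument.
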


\begin{proof}
Assume $F\in H^{2}(\mathbb{D}_{2}^{H})$ is inner. Notice that $[\zeta^\gamma]=\zeta^\gamma A_{\bm{\beta}}^2$ and $F[\zeta^{\tilde{\gamma}}]=\zeta^{\tilde{\gamma}}F A_{\bm{\beta}}^2$. We define a linear operator $U:[\zeta^\gamma]\rightarrow F[\zeta^{\tilde{\gamma}}]$ by putting
$$U(\zeta^\gamma G)=\zeta^{\tilde{\gamma}}FG,\quad G\in A_{\bm{\beta}}^2.$$
It follows easily that $U$ is a unitary module map, and hence $F[\zeta^{\tilde{\gamma}}]\in\mathrm{orb}_{u}([\zeta^\gamma])$.

Conversely, we suppose that $M\in\mathrm{orb}_{u}([\zeta^\gamma])$ and $U: [\zeta^\gamma]\rightarrow M$ is a unitary module map. Since $\zeta^{\gamma}=\zeta^{\gamma-\tilde{\gamma}}\cdot\zeta^{\tilde{\gamma}}$, it follows that
$$C_{\alpha}\zeta^{\gamma}=\begin{cases} \zeta^{\gamma-\tilde{\gamma}} & \alpha=\tilde{\gamma}, \\
0 & \alpha\neq\tilde{\gamma}.\end{cases}$$
Put $F=C_{\tilde{\gamma}}U\zeta^\gamma\in H^{2}(\mathbb{D}_{2}^{H})$. Then (\ref{equ402}) implies that $|F^{*}|=1$ almost everywhere on $\mathbb{T}^{H}$ with respect to $\sigma_{H}$, and $C_{\alpha}U\zeta^{\gamma}=0 \;(\alpha\neq\tilde{\gamma})$. These yield $F\in H^{2}(\mathbb{D}_{2}^{H})$ is inner, and
$$U\zeta^\gamma=\sum\limits_{\alpha\in\mathbb{Z}_{+}^{B}}C_{\alpha}U\zeta^\gamma\cdot\zeta^{\alpha}=F\zeta^{\tilde{\gamma}}.$$
Therefore,
$$M=U[\zeta^\gamma]=[U\zeta^\gamma]=[F\zeta^{\tilde{\gamma}}]=F[\zeta^{\tilde{\gamma}}].$$
The proof is complete.
\end{proof}

For a general polynomial $p\in\mathcal{P}_{\infty}$,   it seems to be difficult how to characterize the unitary orbit of $[p]$ in $A_{\bm{\beta}}^{2}$.

\vskip3mm \noindent{Hui Dan, College of Mathematics, Sichuan University, Chengdu, Sichuan,
610065, China, E-mail:  danhuimath@gmail.com% Dan Hui,

\noindent Kunyu Guo, School of Mathematical Sciences, Fudan
University, Shanghai, 200433, China, E-mail: kyguo@fudan.edu.cn

\noindent Jiaqi Ni, School of Mathematical Sciences, Fudan
University, Shanghai, 200433, China, E-mail: jqni15@fudan.edu.cn

\end{document}